\documentclass[12pt]{amsart}
\usepackage{amsmath}
\usepackage{amssymb}
\usepackage[mathcal]{eucal}
\usepackage[all]{xy}
\usepackage{latexsym}
\usepackage{amstext}
\usepackage{amsfonts}
\usepackage{amsthm}
\usepackage{amsopn}
\usepackage{amsbsy}
\usepackage{layout}
\usepackage{color}
\usepackage{graphicx}
\usepackage{bm}

\theoremstyle{plain}
\newtheorem{theorem}{Theorem}

\newtheorem{lemma}[theorem]{Lemma}
\newtheorem{corollary}[theorem]{Corollary}

\theoremstyle{definition}
\newtheorem{definition}[theorem]{Definition}
\newtheorem{problem}[theorem]{Problem}
\theoremstyle{remark}

\newtheorem{remark}[theorem]{Remark}
\newtheorem{example}[theorem]{Example}

\newcommand{\Pow}{\mathcal P}
\newcommand{\Stone}{S}

\newcommand{\dens}{\operatorname{dens}}

\newcommand{\R}{\mathbb R}
\newcommand{\N}{\mathbb N}

\begin{document}

\title[Two disjoint copies in compacta]{The two-disjoint-copies property for compact spaces, homogeneity \\
 and connection with $C_p$-theory}

\author[J. K\c akol]{Jerzy K\c akol}
\address{Faculty of Mathematics and Informatics, Adam Mickiewicz University in Pozna\'n,
61-614 Pozna\'n, Poland}
\email{kakol@amu.edu.pl}

\author[O. Kurka]{Ond\v {r}ej Kurka}
\address{Institute of Mathematics, Czech Academy of Sciences,
\v{Z}itn\'a 25, 115 67 Praha 1, Czech Republic}
\email{kurka.ondrej@seznam.cz}

\author[W.\ \'Sliwa]{Wies{\l}aw \'Sliwa}
\address{Faculty of Exact and Technical Sciences, University of Rzesz\'ow,
35-310 Rzesz\'ow, Poland}
\email{wsliwa@ur.edu.pl; sliwa@amu.edu.pl}

\date{}

\subjclass[2020]{54C35, 54D30, 54G12, 46E10}
\keywords{$C_p$-space, metrizable quotient, Efimov space, scattered compactum, Cantor--Bendixson derivative, $h$-homogeneity}

\thanks{The authors thank Professor  Tomasz Kania  for  discussions and  comments  during the editing of the publication, Professor Sergey Medvedev  and Dr  Andrea Medini for valuable remarks concerning $h$-homogeneous spaces.
The second named author was supported by the Academy of Sciences of the Czech Republic (RVO 67985840).}

\begin{abstract}
A Tychonoff space $X$ has the \emph{two-disjoint-copies property} (2DCP) if there exists a sequence
$(K_n)_{n\in\omega}$ of non-empty compact subsets of $X$ such that each $K_n$ contains two disjoint subsets
homeomorphic to $K_{n+1}$. Banakh, K\c akol and \'Sliwa showed that 2DCP yields an infinite-dimensional metrizable
quotient of $C_p(X)$, while it is still a long-standing open question whether $C_p(X)$ has such a quotient  for any infinite compact space $X$.   The above concept as well as the last problem are closely  related to Efimov's problem that has remained open for 40 years. We will discuss a number of conditions  that imply 2DCP. For example, every locally homogeneous compact space,  every  space containing a copy of $\beta\omega$ or $2^\omega$ has 2DCP although compact $h$-homogeneous  spaces  with 2DCP without such copies exist in ZFC.
We prove that no scattered compact space has 2DCP  and there exist in ZFC compact  perfect spaces without 2DCP.  This implies that for compact metric spaces $X$ the  2DCP is equivalent to uncountability of $X$.
There exist    explicit uncountable separable compact spaces
failing 2DCP, for example the Isbell-Mr\'owka compacta.
We  give positive classes among zero-dimensional compact spaces; for example, the Brech, as well as the  Sobota-Zdomskyy compact spaces of Efimov type have 2DCP.  Open questions are included.
\end{abstract}

\maketitle

\section{Introduction}
Let $X$ be a Tychonoff space. By $C_p(X)$ we denote the linear space $C(X)$  of continuous real-valued functions endowed with the topology of pointwise convergence.

A line of work initiated by K\c akol and \'Sliwa \cite{Kakol-Sliwa}, and developed further by Banakh, K\c akol and \'Sliwa \cite{BanakhKakolSliwaQuotients}, asks when $C_p(X)$ admits
infinite-dimensional [metrizable] quotients. It  is still  an open question:
\begin{problem}\label{Open}
Let $X$ be a compact space. Does $C_p(X)$ admit an infinite-dimensional  [metrizable] separable quotient?
\end{problem}
If the answer to Problem \ref{Open} is negative, then the space $X$ must be an Efimov space,  while it is still unknown whether  such spaces  do exist in ZFC, see Section \ref{final} for some discussion.

By  Rosenthal's theorem,  see \cite{Rosenthal}, the corresponding Banach space $C(X)$  admits a  quotient isomorphic to the Banach spaces $l_2$ or $c_0$.

The last result is closely related to the still open problem (posed by Banach and Mazur around 1933)   whether every infinite-dimensional Banach space has an infinite-dimensional separable quotient (or equivalently, whether every infinite-dimensional Banach space can be mapped by a continuous linear mapping onto an infinite-dimensional separable Banach space),  see \cite{Mujica} also for references therein.

A useful sufficient  condition related with Problem \ref{Open}   is the following self-similarity property (due to  K\c akol--\'Sliwa) of compact subsets contained in \cite[Theorem 4]{Kakol-Sliwa}.
\begin{definition}[Two-disjoint-copies property]
\label{def:2dcp}
A Tychonoff space $X$ has the \emph{two-disjoint-copies property} (2DCP) if there exists a sequence
$(K_n)_{n\in\omega}$ of non-empty compact subsets of $X$ such that for every $n\in\omega$ the set $K_n$
contains two disjoint subsets homeomorphic to $K_{n+1}.$
\end{definition}
The concept related to 2DCP seems to be one of the possible ways to perhaps better understand Problem \ref{Open}.

Banakh, K\c akol and \'Sliwa showed that 2DCP forces infinite-dimensional metrizable quotients of $C_p(X)$, see \cite[Theorem~2]{BanakhKakolSliwaQuotients}; it is easy to see that every metrizable quotient of $C_p(X)$ is separable.
\begin{theorem}[Banakh--K\c akol--\'Sliwa]
\label{thm:bks}
If a Tychonoff space $X$ has 2DCP, then $C_p(X)$ admits an infinite-dimensional metrizable quotient.
\end{theorem}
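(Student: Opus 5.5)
The plan is to build, from the sequence $(K_n)$, a sequence of finitely supported signed measures $\mu_n$ on $X$ that are pointwise null and have $\|\mu_n\|=1$, and then to invoke the characterization from \cite{BanakhKakolSliwaQuotients}. That characterization says $C_p(X)$ has an infinite-dimensional metrizable quotient iff there is such a sequence, i.e.\ $\mu_n(f)=\sum_{x\in\operatorname{supp}\mu_n}\mu_n(\{x\})f(x)\to0$ for every $f\in C(X)$. Given such $\mu_n$, the map $T\colon C_p(X)\to\R^\omega$, $f\mapsto(\mu_n(f))_n$, is continuous and linear, with range inside $c_0$. One then argues that the range is infinite-dimensional and that the quotient $C_p(X)/\ker T$ carries a metrizable topology. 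So the core of the proof is to construct the measures.

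\textbf{Step 1 (binary tree of embeddings).} Iterating the hypothesis, every $K_n$ contains two disjoint copies of $K_{n+1}$. Each of these contains two disjoint copies of $K_{n+2}$, and so on. Fixing homeomorphic embeddings $h^n_0,h^n_1\colon K_{n+1}\to K_n$ with disjoint images, I get for each finite binary word $s$ of length $m$ a composed embedding $h_s\colon K_m\to K_0$. The images for distinct words of the same length are pairwise disjoint.

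\textbf{Step 2 (choosing points and measures).} For each $n$ pick a point $p_n\in K_n$. Using the tree, for $s$ of length $m$ I get points $x_s=h_s(p_m)\in K_0$. Define
\[
\mu_n=2^{-n}\sum_{|s|=n}\varepsilon_s\,\delta_{x_{s}},
\]
with signs $\varepsilon_s=\pm1$ chosen as Rademacher-type patterns. The intended mechanism is a Walsh/Haar-type cancellation: pair $x_{s0}$ with $x_{s1}$ inside the disjoint copies. The more robust option is to use the homeomorphic copies to transport the \emph{same} finite configuration. For a continuous $f$ one compares $f\circ h_{s0}$ and $f\circ h_{s1}$ on the common domain.

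\textbf{Main obstacle.} The difficulty is showing $\mu_n(f)\to0$ for every continuous $f$, since continuity alone gives no uniform control across different copies. What I would try first is a Rosenthal/Ramsey-type diagonal argument. Suppose for some $f$ and $\varepsilon>0$ we have $|\mu_n(f)|\ge\varepsilon$ infinitely often; then pass to a subsequence. The binary splitting gives $2^n$ disjoint copies, and on each copy $f$ restricted through $h_s$ gives a function on $K_n$ with values in $[-\|f\|,\|f\|]$. A pigeonhole argument on the uniformly bounded averages then forces a sign-alternating choice with small sum. This suggests that $\varepsilon_s$ should be chosen adaptively, i.e.\ not fixed in advance but depending on a fixed countable family of points and a Josefson--Nissenzweig-style selection.

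\textbf{Alternative.} If this fails, I would instead use the tree to produce a continuous surjection from a closed subset of $X$ onto an infinite compact metrizable space, or find a convergent sequence-like structure: disjoint copies give infinitely many pairwise disjoint nonempty compact sets $L_k=h_{1^{k}0}(K_{k+1})$. Then take normalized differences $\tfrac12(\delta_{a_k}-\delta_{b_k})$ with $a_k,b_k\in L_k$ chosen as images of the same point under the two embeddings into $K_{k}$. Checking that these are pointwise null is exactly where the self-similarity must be used, and I expect this to be the main work.
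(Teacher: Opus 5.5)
\textbf{The central step cannot be carried out.} You try to extract from 2DCP a norm-one, pointwise-null sequence of finitely supported measures, and some spaces with 2DCP admit no such sequence. Take $X=\beta\omega$, which has 2DCP: the closures of the even and of the odd numbers are disjoint clopen copies of $\beta\omega$. Suppose $\mu_n$ were finitely supported with $\|\mu_n\|=1$ and $\mu_n(f)\to0$ for all $f\in C(\beta\omega)=\ell_\infty$. Then $(\mu_n)$ would be weak$^*$-null in $\ell_\infty^*$, hence weakly null by Grothendieck's theorem. By Hahn--Banach it would then be weakly null in the closed subspace $\ell_1(\beta\omega)$ of atomic measures that contains it. By the Schur property of $\ell_1(\beta\omega)$ it would be norm-null, a contradiction. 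The same obstruction applies to Brech's space of Example~\ref{Br}, which has 2DCP and a Grothendieck $C(K)$.

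So no choice of the signs $\varepsilon_s$, no Ramsey or diagonal selection, and no alternative pairs $\tfrac12(\delta_{a_k}-\delta_{b_k})$ can give $\mu_n(f)\to0$. The main obstacle you isolate is an impossibility, not a technicality, and the proposal never gets past it in any case. For the same reason the equivalence you attribute to \cite{BanakhKakolSliwaQuotients} cannot hold as an equivalence. $C_p(\beta\omega)$ has the infinite-dimensional metrizable quotient $(\ell_\infty)_p$, yet $\beta\omega$ carries no such sequence (compare Problem~\ref{XX}).

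\textbf{The passage from a null sequence to a quotient is also not automatic.} The dual of $C_p(X)/\ker T$ is the $\sigma(L(X),C(X))$-closure of $\operatorname{span}\{\mu_n\}$ in the space $L(X)$ of finitely supported measures, and this can be uncountable-dimensional. For example, on $[0,1]$ let $(z_n)$ sweep back and forth with step sizes tending to $0$ and put $\mu_n=\tfrac12(\delta_{z_n}-\delta_{z_{n+1}})$. This sequence is pointwise null, yet $\ker T$ is the constants, and $C_p([0,1])/\mathbb R$ is not metrizable.

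What the theorem actually requires is an infinite-dimensional, countable-dimensional subspace $M\subset L(X)$ that is closed in $\sigma(L(X),C(X))$. That is, every finitely supported $\nu$ vanishing on $M_\perp$ must lie in $M$. Then $C_p(X)/M_\perp$ has dual $M$ and is the desired quotient. The generating measures only need to be pointwise bounded, not null. On $\beta\omega$, take block averages $\mu_n=|I_n|^{-1}\sum_{i\in I_n}\delta_i$ over disjoint finite blocks with $|I_n|\to\infty$. These span such an $M$, because large blocks leave room to build $f\in M_\perp$ separating finitely many points of $\omega^*$. This gives the quotient $(\ell_\infty)_p$, even though $\mu_n(1)=1$ for all $n$.

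Your binary tree, with $2^n$ disjoint copies at level $n$, is natural raw material for such large blocks. But the statement you must verify is weak$^*$-closedness of the span, not $\mu_n(f)\to0$. The paper itself does not reprove this theorem; it cites \cite[Theorem~2]{BanakhKakolSliwaQuotients}.
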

The present note is primarily topological.
In Section \ref{First} we  gather  several  conditions  forcing $X$ to have 2DCP (Theorem \ref{I}). For example, every locally homogeneous compact space has 2DCP.
Every topological space  containing   a copy of $\beta\omega$  or a copy of the Cantor set $2^{\omega}$ has 2DCP. Nevertheless, there exist  compact (even $h$-homogeneous) spaces with 2DCP but without such copies, see Example \ref{EX}.

In Section \ref{Second}  we prove that no scattered compact space has 2DCP and show  that  there exist perfect compact spaces $K$ (in ZFC) without 2DCP (Theorem \ref{thm:scattered-no2dcp}). We construct such compact  $K$  of a similar type as in \cite{Brech2}, the so-called split Cantor space.
Consequently,  no scattered  space has 2DCP.
This unifies and strengthens several elementary counterexamples and applications, see illustrating  Examples \ref{prop:psi-basic} and \ref{M} and Corollary \ref{S1}.

In Section \ref{Second} we characterise also    2DCP for compact metric spaces: In fact the 2DCP holds exactly for the uncountable ones
(Theorem~\ref{thm:metric-characterisation}). Nevertheless, we show in Section \ref{final} (Corollary \ref{Rosen}) that a compact Rosenthal space $X$ is scattered if and only if $X$ does not admit 2DCP.

We show  in Section \ref{Second}, see  Example \ref{Br},  that the Brech \cite{Brech}  and the Sobota--Zdomskyy \cite{SobotaZdomskyy} zero-dimensional compact Efimov spaces  $K$ constructed by a method of forcing, admit 2DCP; in fact they  have topological bases of clopen sets all of them are homeomorphic to the whole space  $K$.

We give in Section \ref{homogeneity} simple sufficient conditions for 2DCP in the zero-dimensional setting
(clopen-base homogeneity and $h$-homogeneity).

In Section \ref{final} open questions and examples are also  provided.

For a space $K$, write $K'$ for the derived set, i.e. the set of non-isolated points of $K$.

Define the transfinite Cantor--Bendixson derivatives $(K^{(\alpha)})_{\alpha}$ by
$K^{(0)}=K,\,
K^{(\alpha+1)}=\bigl(K^{(\alpha)}\bigr)',
K^{(\lambda)}=\bigcap_{\alpha<\lambda} K^{(\alpha)}$  for limit ordinal  $\lambda.$

A topological space $K$ is called a \emph{scattered} space if every non-empty subspace of $K$ has an isolated point. By the Cantor--Bendixson Theorem (\cite[8.5.2 Theorem]{SemadeniCk}), $K$ is scattered if and only if $K^{(\alpha)}=\varnothing$ for some ordinal $\alpha$.

If $K$ is a scattered space, let $h(K)$ be the least ordinal $\alpha$ with $K^{(\alpha)}=\varnothing$.
It is easy to check the following  statements  (see for example  \cite{EngelkingGT} and \cite{SemadeniCk}).
\begin{remark} \label{star}
Let $X$ and $Y$ be non-empty topological spaces.

(1) If $X$ is a subspace of $Y$, then $X^{(\alpha)} \subset Y^{(\alpha)}$ for every ordinal $\alpha.$

(2) If $X$ and $Y$ are homeomorphic, then $X^{(\alpha)}$ and $Y^{(\alpha)}$ are homeomorphic for every ordinal $\alpha.$

(3)  If $X$ and $Y$ are scattered and $X$ is embedded into $Y$, then $h(X)\leq h(Y)$.

(4) If $X$ is scattered and compact, then $h(X)=\beta +1$ for some ordinal $\beta$, and $X^{(\beta)}$ is non-empty and finite.
\end{remark}
Recall   \cite[Theorem 8.5.4]{SemadeniCk} that  compact  $X$ is scattered if and only if there is no continuous surjection from $X$ onto $[0,1]$ if and only if dim $X=0$ and there is no continuous surjection from $X$ onto $2^{\omega}$.

By $(c_0)_p$ and $(l_{\infty})_p$ we denote the sequence spaces $c_0$ and $l_{\infty}$, respectively, with the topology induced from $\R^{\N}$.
We propose also the following  fundamental question connected with Problem \ref{Open}.
\begin{problem}\label{XX}
Assume that a compact space $X$ has 2DCP. Does the space $C_p(X)$ admit a quotient isomorphic to $(l_{\infty})_p$ or $(c_0)_p$?
\end{problem} Note that the spaces  $C_p(\beta\omega)$ and $C_p(2^{\omega})$ admit such quotients, see \cite[Theorem 1]{BanakhKakolSliwaQuotients} and \cite[Theorem 1]{Banakh-Kakol-Sliwa}, respectively. Consequently, if a compact space $X$ contains a copy of $\beta\omega$ or $2^\omega$, then applying the restriction mappings we infer that $C_p(X)$ has a quotient isomorphic to $(l_{\infty})_p$ or $(c_0)_p$.
\section{Preliminary facts and Proof of Theorem \ref{I}}\label{First}
Let $X$ be a topological space. For any $x\in X$ we denote by $J(x)$ the set of all maps $\phi$ such that there exists an open neighbourhood $V$ of $x$ such that $\phi: V\rightarrow X$ is injective and continuous.
The set $O(x) = \{\phi (x) :\phi\in J(x)\}$ is called the orbit of $x$.
   A subset  $V\subset X$  is  called a {\it quasi-neighbourhood} of a point $x\in X$ if it is homeomorphic to some open neighbourhood of $x$.

In \cite{Kakol-Sliwa} we discussed some  classes of spaces  admitting   2DCP.   Clearly any Tychonoff space containing a copy of $\beta\omega$ or $2^\omega$ has 2DCP.  Indeed, the last space is zero-dimensional $h$-homogeneous \cite[Lemma 1.9.5]{van Mill} and Theorem \ref{I}(3) applies. Trivially $\beta\omega$ has 2DCP.

The following Theorem \ref{I}  completes these classes of spaces.
\begin{theorem}\label{I}
An infinite compact space $X$ has  2DCP if it satisfies at least one of the following conditions:
\begin{enumerate}
\item The orbit $O(x)$ of every point $x\in X$ is infinite.

\item The set $\{\phi(x): \phi \in \Phi\}$ is infinite for every $x\in X$, where $\Phi$ is the family of all continuous injective  maps from $X$ to $X$.

\item  $X$ is locally homogeneous, i.e.~any two points $x, y \in X$ have open neighbourhoods $V_x$ and $V_y$, such that  there exists a homeomorphism $\varphi: V_x \to V_y$ with $\varphi (x)=y.$

\item For every $x\in X$ and every $n\in \omega$ the space $X$ contains $n$ pairwise disjoint quasi-neighbourhoods of $x$.

\item $X$ contains infinitely many pairwise disjoint quasi-neighbourhoods of any point $x\in X$.

\item Every non-empty open subset $U$ of $X$ contains a quasi-neighbourhood of any point $x\in X$.

\item The complement $(X\setminus F)$ of any finite subset $F$ of $X$ contains a quasi-neighbourhood of any $x\in F$.

\item The complement $(X\setminus F)$ of any finite subset $F$ of $X$ contains a copy of $X$.

\item Every non-empty open subset of $X$ contains two disjoint non-empty homeomorphic open subsets.

\item $X$ contains two disjoint copies of itself.

\item $X$ is extremally disconnected.

\end{enumerate}
\end{theorem}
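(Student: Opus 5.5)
The plan is to reduce all eleven conditions to a small number of "core" conditions, and to prove that each core condition yields 2DCP directly.

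\textbf{Step 1: conditions that give 2DCP directly.}
\begin{itemize}
\item Condition (10) is immediate: take $K_n=X$ for every $n$.
\item Condition (11): an infinite compact space contains an infinite discrete countable set $D$. In an extremally disconnected compactum the closure $\overline D$ is homeomorphic to $\beta\omega$. Since $\beta\omega$ contains two disjoint clopen copies of itself, we again take all $K_n=\beta\omega$.
\item Condition (9): construct a chain of non-empty open sets $U_0\supset U_1\supset\cdots$ with compact closures. Given $U_n$, apply (9) to get disjoint non-empty open sets $A,B\subset U_n$ and a homeomorphism $h\colon A\to B$. By regularity, choose a non-empty open $U_{n+1}$ with $\overline{U_{n+1}}\subset A$.
\end{itemize}
Then $K_{n+1}=\overline{U_{n+1}}$ and $h(K_{n+1})\subset B$ are disjoint copies of $K_{n+1}$ inside $K_n=\overline{U_n}$, which is the 2DCP chain.

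\textbf{Step 2: reduce the remaining conditions to (1).}
\begin{itemize}
\item (2)$\Rightarrow$(1): restrictions of members of $\Phi$ belong to $J(x)$.
\item (3)$\Rightarrow$(1): for every $y$ there is $\varphi\in J(x)$ with $\varphi(x)=y$, so $O(x)=X$, which is infinite.
\item (5)$\Rightarrow$(4)$\Rightarrow$(1): given $n$ pairwise disjoint quasi-neighbourhoods $V_i$ of $x$, we have homeomorphisms $\varphi_i\colon V\to V_i$ from open neighbourhoods $V$ of $x$. The points $\varphi_i(x)$ are pairwise distinct, so $|O(x)|\ge n$ for every $n$.
\item (6)$\Rightarrow$(7): $X\setminus F$ is a non-empty open set because $X$ is infinite.
\item (7)$\Rightarrow$(1): suppose $O(x)$ were finite, and put $F=O(x)\cup\{x\}$. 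By (7), $X\setminus F$ contains a quasi-neighbourhood $\varphi(V)$ of $x$. Then $\varphi(x)\in O(x)\setminus F$, a contradiction.
\item (8)$\Rightarrow$(2): if the set $\{\phi(x):\phi\in\Phi\}$ were a finite set $F$, take an embedding $f\colon X\to X\setminus F$. Then $f\circ\phi\in\Phi$ and $f(\phi(x))\notin F$, a contradiction.
\end{itemize}

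\textbf{Step 3 (main obstacle): (1) implies 2DCP.}
Composition of local injections shows that $y\in O(x)$ implies $O(y)\subset O(x)$ up to shrinking domains. On a compact neighbourhood inside the domain, each $\phi\in J(x)$ is an embedding.

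I would first pick $x$ and let $z$ be an accumulation point of the infinite set $O(x)$. Every neighbourhood $U$ of $z$ contains two distinct orbit points $\phi(x)$ and $\psi(x)$. By continuity at $x$, a small compact neighbourhood $D$ of $x$ has $\phi(D)$ and $\psi(D)$ disjoint and contained in $U$. This gives one step of the chain, from $\overline U$ to $D$.

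The difficulty is iterating: the orbit accumulates at $z$, not inside $D$. I would try two fixes, in this order.
\begin{enumerate}
\item Replace $x$ by a point $x^*$ that is an accumulation point of its own orbit. Then the same compact neighbourhood point recurs at every stage, and the construction of Step 1 can be run with $K_n$ a decreasing sequence of compact neighbourhoods of $x^*$. To find $x^*$, I would use compactness (a Zorn-type minimality argument on the closed sets $\overline{O(y)}$ for $y\in O(x)$). I would also use the fact that some $\phi\in J(x)$ sends a neighbourhood of $x$ into any neighbourhood of $z$, so that $z$ can be used to transport orbit points of $x$ back near $x$.
\item Failing that, alternate between neighbourhoods of $z$ and of $x$. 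To pass from $D$ back to a small neighbourhood of $z$, I would use an orbit point $\phi(x)$ close to $z$: the embedding $\phi$ maps a small neighbourhood of $x$ into a neighbourhood of $z$, and I would then pull the whole configuration back through it.
\end{enumerate}
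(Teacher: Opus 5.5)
\textbf{Verdict: genuine gap.} Your Steps 1 and 2 are correct and essentially match the paper's reductions. Your variants (6)$\Rightarrow$(7) and (8)$\Rightarrow$(2) are also valid.

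The gap is Step 3. The implication (1)$\Rightarrow$2DCP, through which all of (1)--(8) are routed, is not proved; you only offer two candidate strategies. The paper does not prove it here either; it cites Theorem 12 and Lemma 13 of the earlier K\c akol--\'Sliwa paper.

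Fix 2 does not work as described. An orbit point $\phi(x)$ near $z$ only pushes small neighbourhoods of $x$ forward into neighbourhoods of $z$. To pull two disjoint copies lying near $z$ back into $D$, you would need them to lie inside $\phi(D')$. Nothing guarantees this, since maps in $J(x)$ need not be open. For $D$ to contain two disjoint copies of something, you need orbit points accumulating inside $D$ itself, i.e.\ exactly a point that is an accumulation point of its own orbit.

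So Fix 1 is the right idea, but the existence of $x^*$ is the entire content of the step and is only asserted. There is also a defect in the family you propose: for a chain of closures $\overline{O(y)}$ with $y\in O(x)$, a point in the intersection need not lie in $O(x)$, so Zorn does not directly apply. Here is what is missing.
\begin{enumerate}
\item Composition plus continuity gives: if $w\in\overline{O(y)}$ then $O(w)\subseteq\overline{O(y)}$, hence $\overline{O(w)}\subseteq\overline{O(y)}$.
\item Consequently, by compactness, every chain in $\{\overline{O(y)}:y\in X\}$ has a lower bound $\overline{O(w)}$, with $w$ taken in its (non-empty) intersection. Zorn then yields a minimal $M=\overline{O(y)}$, and $\overline{O(w)}=M$ for every $w\in M$.
\item Now (1) enters: $O(y)$ is infinite, so it has an accumulation point $w\in M$, and therefore $y\in\overline{O(w)}$.
\item If $y=\chi(w)$ with $\chi\in J(w)$, then $\chi$ maps the infinitely many points of $O(y)$ near $w$ injectively to points of $O(y)$ near $y$. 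Otherwise $y$ is an accumulation point of $O(w)\subseteq\overline{O(y)}$, hence of $O(y)$.
\item Either way $y$ is an accumulation point of its own orbit. Your one-step construction, repeated with $K_n$ a decreasing sequence of compact neighbourhoods of $x^*=y$, then gives 2DCP.
\end{enumerate}
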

\begin{proof} (1) By \cite[Theorem 12 and Lemma 13]{Kakol-Sliwa} and their proofs we get that (1) implies 2DCP.

(2) implies (1), since $\Phi \subset J(x)$ for every $x\in X.$

By (3) we get $O(x)=X$ for every $x\in X$, so (3) implies (1).

(4) implies (1). Indeed, let $x\in X$. Let $n \in \omega$ and let $W_1, \ldots, W_n$ be pairwise disjoint quasi-neighbourhoods of $x$. Let $j_k: W_k \to X$ be the inclusion map for $1\leq k \leq n$. For every $1\leq k \leq n$ there exists an open neighbourhood $U_k$ of $x$ and a homeomorphism $\varphi_k: U_k \to W_k$. Then $$\psi_k:=j_k\circ \varphi_k \in J(x), \,\,1\leq k \leq n$$ and $$\psi_k(x) \neq \psi_l(x),\,\, 1\leq k,l \leq n$$ with $k\neq l$. Thus $|O(x)|\geq n$ for any $n\in \omega$, so $O(x)$ is infinite.

Clearly, (5) implies (4).

(6) implies (5), since $X$ contains infinitely many of pairwise disjoint non-empty open subsets.

(7) implies (1). Indeed, suppose by contrary that the orbit $O(x)$ of some $x$ is finite. Clearly, $x\in O(x)$, so the set $(X\setminus O(x))$ contains a quasi-neighbourhood $W$ of $x$. Thus there exists an open neighbourhood $U$ of $x$ and a homeomorphism $\varphi: U \to W$; consequently  $$\varphi \in J(x),\,\,\,\varphi(x) \in W\cap O(x)=\emptyset.$$ This provides a  contradiction.

Clearly, (8) implies (7).

(9) To prove 2DCP it is enough to show that every compact subset $K$ of $X$ with non-empty interior in $X$, contains two homeomorphic disjoint compact subsets with non-empty interiors in $X$. Let $U_1, U_2$ be two homeomorphic disjoint non-empty open subsets of Int$(K)$. Let $V_1$ be a non-empty open subset of $U_1$ with $\overline{V_1} \subset U_1$ and let $\varphi: U_1 \to U_2$ be a homeomorphism. Then $\overline{V_1}$ and $\varphi (\overline{V_1})$ are homeomorphic disjoint compact subsets of $K$ with non-empty interiors in $X$.

Clearly, (10) implies 2DCP.

(11) implies 2DCP, since every infinite extremally disconnected compact space contains a copy of $\beta \omega$ which has 2DCP.
\end{proof}
Consequently, every locally homogeneous compact space admits  2DCP. Note that there exist locally homogeneous compact spaces not being homogeneous, see Example \ref{T} below.
\section{two-disjoint-copies property  and scatteredness}\label{Second}
We are ready to prove the following  Theorem \ref{thm:scattered-no2dcp} below.  The argument for the first claim is elementary but nontrivial in the sense that it gives a clean obstruction: Indeed,  2DCP enforces a binary branching in the last non-empty Cantor-Bendixson derivative, which
is impossible because that derivative is finite for scattered compacta.

The first part of  Theorem \ref{thm:scattered-no2dcp} suggests  a fundamental question  whether  there exists in ZFC  a non-scattered compact space $K$  without 2DCP.  Surely such a space cannot contain copies of $\beta\omega$ or  $2^\omega$; however, it can contain a nontrivial convergent sequence. This last condition essentially distinguishes Efimov's problem, see Section \ref{final}. We construct
the compact perfect  space $K$  of a  similar  type as in \cite{Brech2};  so-called split Cantor space.
\begin{theorem}\label{thm:scattered-no2dcp}
No scattered compact Hausdorff space has 2DCP. On the other hand, there  exists in ZFC  a perfect compact space $K$ without 2DCP.
\end{theorem}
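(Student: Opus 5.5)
For the first claim I would argue by contradiction, using the Cantor--Bendixson height. Suppose $X$ is scattered compact and $(K_n)_{n\in\omega}$ witnesses 2DCP. Each $K_n$ is a non-empty closed subspace of $X$, hence scattered and compact. By Remark \ref{star}(4), $h(K_n)=\beta_n+1$ and $K_n^{(\beta_n)}$ is non-empty and finite, say of cardinality $m_n\ge 1$.

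Let $A,B\subset K_n$ be disjoint copies of $K_{n+1}$. Each is compact, so closed in $K_n$. By Remark \ref{star}(2) we have $h(A)=h(B)=h(K_{n+1})=\beta_{n+1}+1$, and by (3) this gives $\beta_{n+1}\le\beta_n$. There are two cases.
\begin{itemize}
\item If $\beta_{n+1}=\beta_n$, then by (1) both $A^{(\beta_n)}$ and $B^{(\beta_n)}$ are contained in $K_n^{(\beta_n)}$. They are disjoint, and each is homeomorphic to $K_{n+1}^{(\beta_n)}$, which has $m_{n+1}$ points. Hence $m_n\ge 2m_{n+1}$, i.e.\ $m_{n+1}\le m_n/2$.
\item Otherwise $\beta_{n+1}<\beta_n$.
\end{itemize}
The sequence of ordinals $(\beta_n)$ is non-increasing, so it is eventually constant. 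From that point on $m_n$ at least halves at every step while staying $\ge 1$. This is impossible for a sequence of positive integers, so no such sequence $(K_n)$ exists. This part is routine.

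For the second claim I would build a split-Cantor-type space. The idea is to take a Cantor set $2^\omega$ and a subset $S\subset 2^\omega$, to replace each point of $S$ by two points, and to give the result the lexicographic order topology, obtaining a compact zero-dimensional perfect space $K_S$. The natural choice is a Bernstein-like set $S$: a set that meets every copy of $2^\omega$ (every perfect set), as does its complement, constructed in ZFC by transfinite recursion of length $\mathfrak c$. One should double check that $K_S$ is perfect. Each doubled point has its twin adjacent in the order but is not isolated, because $S$ is chosen dense in itself and disjoint from the countably many endpoint-type points.

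The crux is to show that $K_S$ has no 2DCP. Suppose $(K_n)$ witnesses 2DCP in $K_S$. First I would pass to the intersections and the structure forced by self-embeddings. The natural projection $\pi\colon K_S\to 2^\omega$ is finite-to-one. If some $K_n$ is countable, it is scattered, and the first part rules this out because $K_{n+1},K_{n+2},\dots$ would all be scattered. So every $K_n$ is uncountable, and $\pi(K_n)$ contains a perfect set $P$. Since $S$ meets $P$ in $\mathfrak c$ many points, $K_n$ contains many split points, around which it looks locally like the double arrow space. The double arrow space contains no copy of $2^\omega$ and is not metrizable.

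The main step is then to show that an embedding $K_{n+1}\hookrightarrow K_n$ must, off a countable set, act on $\pi$-images as an order-preserving or order-reversing monotone map. Here I would use the fact that continuous maps between such lexicographically ordered spaces are locally monotone, as in the double arrow space, where homeomorphic embeddings are piecewise monotone up to countably many points. This yields a partial map between subsets of $2^\omega$ that sends split points to split points and non-split points to non-split points. A Bernstein-type diagonalization can kill all such maps, because there are only $\mathfrak c$ many candidate monotone partial functions with Borel ($G_\delta$) extensions. I would choose $S$ so that for every such nontrivial map $f$ with uncountable domain there is a point $x\in S$ with $f(x)\notin S$, or vice versa.

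Having two disjoint embedded copies at every stage forces infinitely many such non-identity maps with uncountable domain, and hence a contradiction. More simply, already one non-identity embedding with uncountable domain gives one. The delicate part, and what I expect to be the main obstacle, is this rigidity step: proving that embeddings induce Borel monotone partial maps on $2^\omega$ with perfect domain, and organizing the diagonalization in ZFC so that every such map is killed.
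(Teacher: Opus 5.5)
Your first part is correct and is the paper's argument: the Cantor--Bendixson heights stabilise, and the finite top derivative must then halve at every step.

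The second part is a programme rather than a proof, and the gap is exactly the step you flag. Your construction needs the following. If $h\colon L_1\to L_2$ is a homeomorphism between disjoint perfect compact subsets of $K_S$, then, off a countable set, $h$ induces an injective map $g$ on $\pi(L_1)$ such that:
\begin{enumerate}
\item $g$ belongs to a family of size $\mathfrak c$ fixed before $S$ is built;
\item $x\in S\iff g(x)\in S$.
\end{enumerate}
You support this only by appeal to ``local monotonicity as in the double arrow space''. That is not proved. It is also not what delivers the property: points outside $S$ carry no orientation, and you never say how monotonicity would force split points to go to split points.

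The ingredient that does the work is a different lemma. For every continuous $F$ from a closed $L\subseteq K_S$ into a metric space, $F(x^-)=F(x^+)$ for all but countably many $x\in S$ with $x^-,x^+\in L$. To see this, suppose uncountably many $x$ satisfy $d(F(x^-),F(x^+))>\varepsilon$. Then one of them is the limit of a strictly increasing sequence $x_n$ of such points, and both $x_n^-$ and $x_n^+$ converge to $x^-$, a contradiction. Apply this to $\pi\circ h$ and $\pi\circ h^{-1}$, and add some fibre counting. You use that $\pi(L_1)\cap\pi(L_2)$ is finite, and that only countably many $x\in S\cap\pi(L_i)$ have just one of $x^\pm$ in $L_i$, since each such $x$ bounds a gap of $\pi(L_i)$. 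The result is a $g$ which is continuous and injective on a co-countable subset of $\pi(L_1)$, satisfies $g(x)\neq x$, and respects $S$ off a countable set.

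Your kill condition, one point $x\in S$ with $f(x)\notin S$ per candidate map, is also too weak. The countable exceptional set depends on $h$, which is unknown during the recursion. Each candidate, a continuous map on a $G_\delta$ set by Kuratowski's extension theorem, must be handled at $\mathfrak c$ many stages, each time with a fresh $x$ and a fresh $g(x)$. This is possible because $g$ is injective on a set of size $\mathfrak c$.

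With these two repairs your route works. It is a genuine alternative to the paper: a fixed order-splitting and a Bernstein-type choice of $S$, paid for by a rigidity lemma about homeomorphisms. The paper needs no rigidity analysis because the splitting itself is part of the recursion. The two halves of $r_\xi$ are given by a continuous $f_{r_\xi}\colon 2^\omega\setminus\{r_\xi\}\to 2$, chosen against the countable trace $(A_\xi,B_\xi,\gamma_\xi)$ of a putative homeomorphism. Then a single sequence $a_\xi^n\to r_\xi$ with $\gamma_\xi(a_\xi^n)\to s_\xi$, where $s_\xi$ is unsplit but $f_{r_\xi}(a_\xi^n)$ oscillates, already contradicts the continuity of $h^{-1}$.
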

\begin{proof}
Suppose by contradiction that there is a sequence $(K_n)_{n\in\omega}$ as in Definition \ref{def:2dcp}.
Put $\alpha_n=h(K_n)$ for $n\in\omega$.
Since $K_{n+1}$ embeds into $K_n$, then by Remark \ref{star} we get $\alpha_{n+1}\leq \alpha_n$ for $n\in\omega$. Let $\alpha_p$ be the least element of the set $\{\alpha_n: n\in\omega\}$.
Then $\alpha_n=\alpha_p$ for every $n\geq p$. By Remark \ref{star}, $\alpha_p=\beta +1$ for some ordinal $\beta$ and $K_n^{(\beta)}$ is non-empty and finite for every $n\geq p$. Let $n\geq p$. Let $C_{n,0}$ and $C_{n,1}$ be disjoint compact subsets of $K_n,$ that are homeomorphic to $K_{n+1}$. By Remark \ref{star}, $C_{n,0}^{(\beta)}$ and $C_{n,1}^{(\beta)}$ are disjoint compact subsets of $K_n^{(\beta)},$ that are homeomorphic to $K_{n+1}^{(\beta)}$. Thus
\[ |K_n^{(\beta)}|\ \geq \ |C_{n,0}^{(\beta)}|+|C_{n,1}^{(\beta)}|\ =\ 2\,|K_{n+1}^{(\beta)}|,\] for every $n\geq p.$
It follows that $$|K_p^{(\beta)}|\geq 2^m |K_{p+m}^{(\beta)}|\geq 2^m$$ for all $m\in\omega$; a contradiction. Thus $X$ does not have 2DCP.

Now we prove the other part of theorem: Let
$$ \mathcal{Z} = \big\{ (A, B, \gamma) : \textrm{$ A, B \subset 2^{\omega} $ countably infinite, $ \gamma : A \to B $ bijective} \big\}. $$
Then there is an enumeration
$$ \mathcal{Z} = \big\{ (A_{\xi}, B_{\xi}, \gamma_{\xi}) : 0 \leq \xi < \mathfrak{c} \big\}. $$
Recursively, for $ 0 \leq \xi < \mathfrak{c} $, we want to find
\begin{itemize}
\item $ r_{\xi}, s_{\xi} \in 2^{\omega} $ different from each other and from $ r_{\xi'}, s_{\xi'}, \xi' < \xi $,
\item a continuous $ f_{r_{\xi}} : 2^{\omega} \setminus \{ r_{\xi} \} \to 2 = \{ 0, 1 \} $ that does not have a limit in $ r_{\xi} $,
\item a sequence $ a_{\xi}^{n} \in A_{\xi} \setminus \{ r_{\xi} \}, n \in \omega $,
\end{itemize}
satisfying the requirements
\begin{itemize}
\item[{(i)}] $ a_{\xi}^{n} \to r_{\xi} $ as $ n \to \infty $,
\item[{(ii)}] $ \gamma_{\xi}(a_{\xi}^{n}) \to s_{\xi} $ as $ n \to \infty $,
\item[{(iii)}] $ f_{r_{\xi}}(a_{\xi}^{n}) $ does not converge for $ n \to \infty $.
\end{itemize}
It may happen that it is not possible to find $ r_{\xi}, s_{\xi}, f_{r_{\xi}} $ and $ a_{\xi}^{n} $ satisfying (i)--(iii). In such a case, we do not pose these requirements.

The resulting space $ K $ will be the space $ 2^{\omega} $ in which each $ x $ of the form $ x = r_{\xi} $ is replaced by two points $ x^{0} $ and $ x^{1} $ with their neighbourhoods intersected by $ \{ f_{x} = 0 \} $ and $ \{ f_{x} = 1 \} $.

To provide a precise definition, let us put $ \mathcal{R} = \{ r_{\xi} : 0 \leq \xi < \mathfrak{c} \} $. Let us denote by $ B_{\delta}(x) $ the open ball around $ x $ with radius $ \delta > 0 $ (for a fixed compatible distance on $ 2^{\omega} $). We equip
$$ K = \{ x^{0} : x \in \mathcal{R} \} \cup \{ x^{1} : x \in \mathcal{R} \} \cup \{ x : x \in 2^{\omega} \setminus \mathcal{R} \} $$
with the topology given by the neighbourhoods
$$ U(x, \delta) = \Pi^{-1}({B_{\delta}(x)}), \quad x \in 2^{\omega} \setminus \mathcal{R}, \; \delta > 0, $$
$$ U(x^{\tau}, \delta) = \{ x^{\tau} \} \cup \Pi^{-1} \big( \{ p \in B_{\delta}(x) \setminus \{ x \}: f_{x}(p) = \tau \} \big), \quad x \in \mathcal{R};$$  $\tau \in 2, \delta > 0,$
where we define
$$ \Pi(x) = x, \; x \in 2^{\omega} \setminus \mathcal{R}, \quad \Pi(x^{0}) = \Pi(x^{1}) = x, \; x \in \mathcal{R}. $$

Let us note first that $ K $ does not have isolated points. For $ x \in 2^{\omega} \setminus \mathcal{R} $, it is clear that $ U(x, \delta) $ contains points different from $ x $. For $ x \in \mathcal{R} $ and $ \tau \in 2 $, we use that $ f_{x} : 2^{\omega} \setminus \{ x \} \to 2 $ does not have a limit in $ x $, so we can find a sequence $ p_{n} $ converging to $ x $ such that $ f_{x}(p_{n}) = \tau $.

\textbf{Claim:} $ K $ does not contain two disjoint copies of a non-scattered compact space $ L $.

Indeed, assume that $ L_{1}, L_{2} \subset K $ are disjoint non-scattered and $ h : L_{1} \to L_{2} $ is a homeomorphism. Note that $ \Pi(L_{1}) $ and $ \Pi(L_{2}) $ are compact, as $ \Pi $ is continuous. We can assume that $ L_{1} $ and $ L_{2} $ are perfect, and so are $ \Pi(L_{1}) $ and $ \Pi(L_{2}) $. Both $ \Pi(L_{1}) $ and $ \Pi(L_{2}) $ have cardinality continuum, and the same follows for $ L_{1} $ and $ L_{2} $.

We can also assume that $ \Pi(L_{1}) $ and $ \Pi(L_{2}) $ are disjoint, because $ \Pi(L_{1}) \cap \Pi(L_{2}) $ must be finite (otherwise there is an accumulation point $ x $ of $ \Pi(L_{1}) \cap \Pi(L_{2}) $, which satisfies $ x \in L_{1} \cap L_{2} $ if $ x \notin \mathcal{R} $, respectively, $ x^{0} \in L_{1} \cap L_{2} $ or $ x^{1} \in L_{1} \cap L_{2} $ if $ x \in \mathcal{R} $).

Let $ C \subset \Pi(L_{1}) $ be a dense countable subset. For each $ c \in C $, we choose some $ \eta(c) \in L_{1} \cap \Pi^{-1}(c) $. Then $ \eta(C) $ is dense in $ L_{1} $ (to show for instance that $ U(x^{0}, \delta) $ intersects $ \eta(C) $, where $ x^{0} \in L_{1} $, we use that $ U(x^{0}, \delta) \setminus \{ x^{0} \} $ intersects $ L_{1} $, thus the open set $$ \{ p \in B_{\delta}(x) \setminus \{ x \}: f_{x}(p) = 0 \} $$ intersects $ \Pi(L_{1}) $, and so it intersects $ C $). Hence, $ h(\eta(C)) $ is dense in $ L_{2} $. We further put $ B = \Pi(h(\eta(C))) $, which is a countable dense subset of $ \Pi(L_{2}) $. For each $ b \in B $, we choose some $$ \sigma(b) \in h(\eta(C)) \cap \Pi^{-1}(b).$$ Then $ \sigma(B) $ is dense in $ L_{2} $ (by the same argument as above), hence $ h^{-1}(\sigma(B)) $ is dense in $ L_{1} $. Finally, let $$ A = (\eta^{-1} \circ h^{-1} \circ \sigma)(B),$$ which is dense in $ \Pi(L_{1}) $, and let $$ \gamma = (\eta^{-1} \circ h^{-1} \circ \sigma)^{-1} = \sigma^{-1} \circ h \circ \eta|_{A}.$$

Now, there is $ \xi < \mathfrak{c} $ such that $$ (A, B, \gamma) = (A_{\xi}, B_{\xi}, \gamma_{\xi}).$$  The set $$ M_{\xi} = \Pi^{-1}(\{ r_{\xi'} : \xi' < \xi\} \cup \{ s_{\xi'} : \xi' < \xi\}) $$ has cardinality less than continuum, thus we can choose $$ w \in L_{1} \setminus (M_{\xi} \cup h^{-1}(L_{2} \cap M_{\xi})).$$  Then $ r = \Pi(w) $ and $ s = \Pi(h(w)) $ are different from $ r_{\xi'}, s_{\xi'}, \xi' < \xi $. Also, $ r \neq s $, since $ \Pi(L_{1}) $ and $ \Pi(L_{2}) $ are disjoint. Let $ w_{n} $ be a sequence in $ h^{-1}(\sigma(B)) = \eta(A) $ converging to $ w $ with $ \Pi(w_{n}) \neq \Pi(w) $, and let $ a_{n} = \eta^{-1}(w_{n}) $. Then $$ a_{n} = \eta^{-1}(w_{n}) = \Pi(w_{n}) $$ converges to $ \Pi(w) = r $, and $$ \gamma(a_{n}) = (\sigma^{-1} \circ h)(w_{n}) = \Pi(h(w_{n})) $$ converges to $ \Pi(h(w)) = s $.

It is not difficult to show that there is a continuous $ f : 2^{\omega} \setminus \{ r \} \to 2 $ such that $ f(a_{n}) $ does not converge for $ n \to \infty $. Indeed, let $ V_{0} = 2^{\omega} $ and, for $ k = 0, 1, 2, \dots $, let $ V_{k+1} \subset V_{k} $ be a clopen neighbourhood of $ r $ such that $ V_{k+1} \subset B_{1/(k+1)}(r) $ and $ V_{k} \setminus V_{k+1} $ contains $ a_{n} $ for some $ n $. We define $ f(x) = 0 $ for $$ x \in \bigcup \{ V_{k} \setminus V_{k+1} : \textrm{$ k $ is even} \} $$ and $ f(x) = 1 $ for $$ x \in \bigcup \{ V_{k} \setminus V_{k+1} : \textrm{$ k $ is odd} \}.$$

Then $ r, s, f $ and $ a_{n} $ satisfy (i)--(iii), and it follows that $ r_{\xi}, s_{\xi}, f_{r_{\xi}} $ and $ a_{\xi}^{n} $ satisfy (i)--(iii).

As $ s_{\xi} \notin \mathcal{R} $, by (ii), the sequence $$ \sigma(\gamma(a_{\xi}^{n})) = h(\eta(a_{\xi}^{n})) $$ converges to $ s_{\xi} $ in $ K $. In particular, the sequence $ h(\eta(a_{\xi}^{n})) $ is convergent in $ L_{2} $, and so the sequence $ \eta(a_{\xi}^{n}) $ is convergent in $ L_{1} $. As $ r_{\xi} \in \mathcal{R} $, by (i), $ \eta(a_{\xi}^{n}) $ converges to $ (r_{\xi})^{\kappa} $ for some $ \kappa \in 2 $. But then $ f_{r_{\xi}}(a_{\xi}^{n}) $ converges to $ \kappa $, which contradicts (iii). The proof of Claim is completed.

Finally, we show that $ K $ does not have 2DCP. Assume that $ K $ has 2DCP and that it is witnessed by a sequence $ (K_{n})_{n\geq 0} $. Then $ L = K_{1} $ has 2DCP, which is witnessed by $ (K_{n})_{n \geq 1} $, and it follows that it is not scattered. As $ K $ contains a copy of $ K_{0} $ and $ K_{0} $ contains two disjoint copies of $ L = K_{1} $, we obtain a contradiction with the claim.
\end{proof}
The last part of (the proof of)   Theorem \ref{thm:scattered-no2dcp}    yields
\begin{corollary}\label{SCA}
No scattered  space $X$ has 2DCP.
\end{corollary}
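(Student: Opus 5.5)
The plan is to reduce to the compact case already handled in the first part of Theorem \ref{thm:scattered-no2dcp}. Suppose, towards a contradiction, that a scattered Tychonoff space $X$ has 2DCP, and let $(K_n)_{n\in\omega}$ be a witnessing sequence of non-empty compact subsets of $X$. The key observation is that 2DCP is inherited by the first member of the witnessing sequence. Each $K_n$ is a subset of $K_0$, or more precisely embeds into $K_0$ via the chain of embeddings $K_n\hookrightarrow K_{n-1}\hookrightarrow\dots\hookrightarrow K_0$. Hence the compact space $K_0$ itself has 2DCP, witnessed by the same sequence read as subsets of $K_0$. To avoid bookkeeping with embeddings, we simply note that $K_0$ contains $K_0$, and $K_0$ contains two disjoint copies of $K_1$. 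Inductively, we transport the copies: Definition \ref{def:2dcp} only asks for subsets $K_n'\subset K_0$ with $K_n'$ homeomorphic to $K_n$. Each $K_{n+1}$ homeomorphic copy inside $K_n$ gives, via the homeomorphism $K_n\to K_n'$, two disjoint copies of $K_{n+1}$ inside $K_n'$. So we may put $K_{n+1}'$ equal to one of them.

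Next, $K_0$ is scattered, since every non-empty subspace of $K_0$ is a subspace of $X$ and therefore has an isolated point. Being a compact subset of a Tychonoff (hence Hausdorff) space, $K_0$ is a scattered compact Hausdorff space. By the first part of Theorem \ref{thm:scattered-no2dcp}, $K_0$ cannot have 2DCP, which is a contradiction.

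Alternatively, and more directly, one can repeat the Cantor--Bendixson counting argument verbatim. That argument only uses that the $K_n$ are compact scattered spaces with $K_{n+1}$ embedding twice disjointly into $K_n$. Remark \ref{star} applies since subspaces of scattered spaces are scattered. Then the finite sets $K_n^{(\beta)}$ for the stabilised height $\beta+1$ give $|K_p^{(\beta)}|\ge 2^m$ for all $m$, which is again a contradiction.

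There is no real obstacle. The only point needing care is that scatteredness passes to the compact subspaces $K_n$ (which is immediate from the definition) and that compact subsets of a Tychonoff space are Hausdorff, so that Remark \ref{star}(4) applies.
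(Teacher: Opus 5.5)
Your proposal is correct and follows the paper's route: the paper obtains Corollary~\ref{SCA} from the last step of the proof of Theorem~\ref{thm:scattered-no2dcp}, where a compact member of a witnessing sequence is shown to inherit 2DCP; being a scattered compact space, it then contradicts the first part of that theorem. Your explicit transport of the copies into $K_0$ fills in a step the paper leaves tacit: its claim that $(K_n)_{n\geq 1}$ witnesses 2DCP for $K_1$ needs the $K_n$ to be replaced by homeomorphic copies inside $K_1$. Your alternative remark, that the Cantor--Bendixson counting argument applies verbatim to any scattered $X$, is also valid.
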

The scattered nature of a compact space $X$ appears in a number of important theorems of functional analysis, for example, the Namioka-Phelps theorem, see  \cite[Theorem 18]{Phelps},   states that a Banach space $C(X)$ has the Asplund property if and only if the compact space $X$ is scattered. Using Theorems \ref{I}  and \ref{thm:scattered-no2dcp}, we obtain the following practical
\begin{corollary}\label{S1}  Let $X$ be a compact space with 2DCP. Then $C(X)$ is not Asplund, i.e., there exists a continuous convex function $f$ on $C(X)$ such that the set of all points of Fr\'echet differentiability of $f$ is not dense (but it is always  a $G_{\delta}$-set by \cite[Lemma 8.23]{Fabian}).
\end{corollary}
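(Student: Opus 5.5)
The plan is to argue by contradiction, combining Theorem \ref{thm:scattered-no2dcp} with the classical Namioka--Phelps characterisation quoted just before the statement.

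Suppose $X$ is a compact space with 2DCP and $C(X)$ is Asplund. By the Namioka--Phelps theorem \cite[Theorem 18]{Phelps}, $C(X)$ is Asplund if and only if $X$ is scattered. Hence $X$ would be a scattered compact space. The first part of Theorem \ref{thm:scattered-no2dcp} says that no scattered compact Hausdorff space has 2DCP. This contradicts the assumption, so $C(X)$ is not Asplund. Alternatively, Corollary \ref{SCA} gives the same conclusion directly. Note that Theorem \ref{I} only supplies examples of spaces to which the corollary applies, such as locally homogeneous compacta and spaces containing $\beta\omega$ or $2^\omega$; it is not needed in the argument itself.

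Next we unpack ``not Asplund'' into the stated reformulation. By definition, a Banach space $E$ is Asplund if every continuous convex function on every open convex subset is Fr\'echet differentiable on a dense $G_\delta$ subset. It is standard that it suffices to test continuous convex functions defined on the whole space; see \cite{Phelps} and \cite{Fabian}. So the failure of the Asplund property gives a continuous convex $f$ on $C(X)$ whose set of Fr\'echet differentiability points is not a dense $G_\delta$. By \cite[Lemma 8.23]{Fabian}, the set of points of Fr\'echet differentiability of a continuous convex function is always a $G_\delta$ set. Therefore it must fail to be dense, which is exactly the stated conclusion.

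The only step that needs any care is this last one: reducing from convex functions on open convex subsets to convex functions on all of $C(X)$. If one prefers to avoid that reduction, one can argue concretely. Since $X$ is not scattered, there is a continuous surjection $X\to[0,1]$, so $C([0,1])$ embeds isometrically into $C(X)$. An $\ell_1$-type norm witnessing non-Asplundness on the separable subspace then extends to all of $C(X)$, for example via the equivalent norm construction in \cite{Fabian}. I expect citing the standard reduction to be sufficient.
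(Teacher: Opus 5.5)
Your proposal is correct and follows essentially the paper's route: 2DCP excludes scatteredness by Theorem \ref{thm:scattered-no2dcp}, and the Namioka--Phelps theorem \cite[Theorem 18]{Phelps} then excludes the Asplund property, with the reformulation obtained from the definition together with \cite[Lemma 8.23]{Fabian}. You are also right that Theorem \ref{I} carries no logical weight here (the paper only cites it alongside), and the concrete renorming sketch in your last paragraph is not needed.
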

Recall briefly the  Isbell-Mr\'owka  space $\Psi(\mathcal A)$.
 Let $\mathcal{A}$ be an almost disjoint family of subsets of  natural numbers $\mathbb{N}$ and let $\Psi(\mathcal{A})$ be the set $\mathbb{N} \cup \mathcal{A}$
equipped with the topology defined as follows. For each $n \in \mathbb{N}$, the singleton $\{n\}$ is open, and
for each $A \in \mathcal{A}$, a base of neighbourhoods of $A$ is the collection of all sets of the form
$\{A\} \cup B$, where $B\subset A$ and $|A \setminus B| < \omega$.
If $\mathcal{A}$ is a maximal almost disjoint family, then  $\Psi(\mathcal{A})$ is pseudocompact, separable and locally compact,  see  also \cite[Chapter 8]{Hernandez} for several  properties of these spaces.
 Now, denote by $X=\Psi(\mathcal A)^*$ the one-point compactification of $\Psi(\mathcal{A})$.
 Then $h(X)=3.$
\begin{example}\label{prop:psi-basic}
$X=\Psi(\mathcal A)^*$ is uncountable, separable, scattered compact space, and has Cantor--Bendixson height $3$:
In particular, $X$ fails 2DCP.
\end{example}
Note that the statement from \cite[Proposition 4.6]{Leiderman} before Corollary \ref{SCA}  seems to be the best optimal. Indeed, in \cite[Remark 5.3]{Leiderman} the authors provided an  example of a scattered pseudocompact locally compact space $X$ for which
$\beta X$ is not scattered.
On the other hand, we have also the following
\begin{example}\label{M}  There exists  a pseudocompact locally compact scattered Isbell-Mr\'owka space $Y=\Psi(\mathcal A)$ such that $\beta Y$  has 2DCP.
\end{example}
\begin{proof}
There exists a maximal almost  disjoint family  $\mathcal{A}$ of subsets of  natural numbers $\mathbb{N}$ such that the Isbell-Mr\'owka space $Y=\Psi(\mathcal A)$ has the property  that $(\beta Y\setminus Y)$ is homeomorphic to the interval $[0,1]$ (see \cite[Theorem 8.6.2]{Hernandez}). In particular, $\beta Y$  is not scattered. Clearly  $Y$ is pseudocompact, locally compact and scattered. Since $(\beta Y\setminus Y)$  is a closed subspace of  $\beta Y$  and $(\beta Y\setminus Y)$ has 2DCP (as homeomorphic to $[0,1]$), the space $\beta Y$ has 2DCP as claimed.
\end{proof}
\begin{remark}\label{rem:psi-Cp}
Although $X=\Psi(\mathcal A)^*$ fails 2DCP, the space $C_p(X)$ admits infinite-dimensional metrizable quotients:
$X$ contains many non-trivial convergent sequences, and we apply  \cite[Theorem 1]{Banakh-Kakol-Sliwa}.
Thus 2DCP is sufficient but not necessary for metrizable quotients of $C_p(X)$.
\end{remark}
On the other hand, applying Theorem \ref{thm:scattered-no2dcp} we have the following characterization of 2DCP for compact metric spaces.
\begin{theorem}\label{thm:metric-characterisation}
Let $K$ be a compact metric space. The following assertions are equivalent:
\begin{enumerate}
\item $K$ has 2DCP;
\item $K$ is uncountable;
\item $K$ contains a  homeomorphic  copy of the Cantor set $2^\omega$.
\end{enumerate}
\end{theorem}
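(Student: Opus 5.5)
We prove the cycle $(1)\Rightarrow(2)\Rightarrow(3)\Rightarrow(1)$.

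For $(1)\Rightarrow(2)$ we argue by contraposition. Suppose $K$ is a countable compact metric space. A countable compact Hausdorff space is scattered: a non-empty perfect subset of a compact Hausdorff space would be a compact space without isolated points, hence of cardinality at least $\mathfrak c$ by the usual Cantor-scheme argument (or by the Baire category theorem applied to the countable union of its singletons). Every subspace of $K$ is countable, and its closure is countable and compact, so it is scattered and has an isolated point. By the first part of Theorem \ref{thm:scattered-no2dcp}, $K$ fails 2DCP. (Alternatively one could cite Corollary \ref{SCA} directly.)

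For $(2)\Rightarrow(3)$ we use the classical Cantor--Bendixson / Alexandroff--Hausdorff theorem. An uncountable compact metric space is an uncountable Polish space, so it contains a homeomorphic copy of $2^\omega$. If a self-contained sketch is wanted, it goes as follows. Let $P$ be the set of condensation points of $K$. Since $K$ is second countable, $K\setminus P$ is countable, so $P$ is non-empty, closed and without isolated points. Inside $P$ we build a Cantor scheme $(F_s)_{s\in 2^{<\omega}}$ of closed sets with non-empty interior in $P$, with disjoint children and diameters tending to $0$. The map $2^\omega\ni x\mapsto \bigcap_n F_{x|n}$ is then a continuous injection from a compact space into a Hausdorff space, hence an embedding.

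For $(3)\Rightarrow(1)$: if $C\subset K$ is homeomorphic to $2^\omega$, put $K_n=C$ for every $n$. Since $2^\omega$ is homeomorphic to $2^\omega\times 2$, the set $C$ splits into two disjoint clopen copies of itself. Hence $(K_n)_{n\in\omega}$ witnesses 2DCP as in Definition \ref{def:2dcp}, and this also matches the remark in Section \ref{First} that spaces containing $2^\omega$ have 2DCP.

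The only step with real content is $(2)\Rightarrow(3)$. We expect to quote it as the standard perfect set theorem for Polish spaces rather than reprove it. The remaining care is in $(1)\Rightarrow(2)$, namely checking that countable compacta are scattered, which is immediate from Baire.
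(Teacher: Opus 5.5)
Your proof is correct and follows essentially the same route as the paper: (3)$\Rightarrow$(1) via the two disjoint clopen copies of $2^\omega$ inside $C$ with $K_n=C$, (2)$\Rightarrow$(3) via the perfect set theorem for uncountable Polish spaces, and (1)$\Rightarrow$(2) by observing that countable compacta are scattered and invoking Theorem \ref{thm:scattered-no2dcp}. The only difference is that you justify that countable compacta are scattered via Baire category, where the paper simply cites Semadeni; your phrasing there reads as circular, but the intended argument is sound: for non-empty $A\subseteq K$, the countable compact set $\overline{A}$ has an isolated point, and that point lies in $A$ and is isolated in $A$.
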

\begin{proof}
(2)$\Leftrightarrow$(3) is classical: Recall that  every uncountable Polish space contains a copy of $2^\omega$, see \cite[Corollary 6.5]{Kechris}.

(3)$\Rightarrow$(1): If $C\subset K$ is homeomorphic to the space $2^\omega$, then $C$ contains two disjoint clopen subsets each homeomorphic to $2^\omega$;
for instance the sets of binary sequences starting with $0$ and with $1$.
Taking the sequence $K_n=C$ for all $n$ witnesses 2DCP.

(1)$\Rightarrow$(2): If $K$ were countable compact, then the space $K$ is scattered, see \cite[Proposition 8.5.7]{SemadeniCk}, contradicting our
Theorem~\ref{thm:scattered-no2dcp}.
\end{proof}
We provide also the following Example \ref{Br}  which is strongly motivated by Brech's \cite[Theorem 3.1]{Brech}. Brech  used  the  method of forcing  to show  that consistently there is a
Banach space of continuous functions on a compact Hausdorff space with the
Grothendieck property and with density less than the continuum.
\begin{theorem}[Brech]
Assume  \(V\models\mathrm{CH}\), let \(\kappa>\omega_1\) be a regular cardinal,
and let \(G\) be \(\mathbb S_\kappa\)-generic over \(V\), where \(\mathbb S_\kappa\) is
the product of \(\kappa\) copies of Sacks forcing. Work in the extension \(V[G]\).
Consider the Boolean algebra $B:=\Pow(\omega)\cap V$
(with the usual Boolean operations). Let  $K:=\Stone(B)$ be its Stone space.
In \(V[G]\), the Banach space \(C(K)\) is Grothendieck and  $\dens C(K)=\omega_1<\mathfrak c=\kappa .$
\end{theorem}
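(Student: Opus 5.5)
The plan is to split the statement into a cardinal-arithmetic part (density $\omega_1$ and $\mathfrak c=\kappa$) and a combinatorial part (the Grothendieck property). The cardinal part uses standard facts about the countable-support product $\mathbb S_\kappa$ of Sacks forcing. Under CH in $V$ and with $\kappa$ regular $>\omega_1$:

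\begin{enumerate}
\item $\mathbb S_\kappa$ is proper, in fact it satisfies Axiom A via fusion. So $\omega_1$ is preserved.
\item $\mathbb S_\kappa$ is $\omega_2$-c.c., by a $\Delta$-system argument on the countable supports using CH. So all cardinals are preserved.
\item $\mathbb S_\kappa$ adds $\kappa$ distinct reals, and counting nice names gives $\kappa^{\omega}=\kappa$ reals in total. Hence $\mathfrak c=\kappa$ in $V[G]$.
\end{enumerate}

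Since $V\models$ CH, the algebra $B=\Pow(\omega)\cap V$ has cardinality $\omega_1$ in $V[G]$. Its clopen sets form a base of $K=\Stone(B)$, so $w(K)=\omega_1$. Because $K$ is infinite, $\dens C(K)=w(K)=\omega_1<\kappa=\mathfrak c$.

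For the Grothendieck property I would use the standard reduction for Stone spaces. By Rosenthal's disjointification lemma, $C(K)$ is Grothendieck iff the following holds: for every bounded weak$^*$-null sequence of measures $(\mu_n)$ on $K$ and every pairwise disjoint sequence $(a_n)$ in $B$, we have $\mu_n(a_n)\to 0$. Suppose this fails in $V[G]$, say $|\mu_n(a_n)|>\varepsilon$ for all $n$. Rosenthal's lemma then produces an infinite $M\subset\omega$ such that $|\mu_n|\bigl(\bigcup_{m\in M,\,m\ne n}a_m\bigr)<\varepsilon/2$ for all $n\in M$. If $\bigvee_{m\in N}a_m$ existed in $B$ for a suitable infinite $N\subset M$, the clopen set it determines would witness that $\mu_n$ is not weak$^*$-null, a contradiction. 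Note that the sequence $(a_n)$ lies in $V$ coordinatewise, but the sequence itself and $M$ may be new. Since $B$ is $\sigma$-complete in $V$, the contradiction goes through as soon as the required $N$ and the map $n\mapsto a_n$ on $N$ can be taken in $V$.

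The main obstacle is therefore to show that such a ground-model $N$ can be found. I would first code $(a_n)$ and the relevant values of $\mu_n$, suitably rationalized, as a real $x\in V[G]$. Next I would apply the Sacks property of $\mathbb S_\kappa$: every function in $\omega^\omega\cap V[G]$ is captured by a ground-model slalom of width $2^n$. This is proved by fusion over countably many coordinates, using that each condition has countable support. Then, in $V$, I would run a Rosenthal-type diagonalization against all $2^n$ candidates simultaneously. It thins out to a ground-model infinite $N$ and ground-model disjoint elements $b_n\in B$ whose finite approximations agree with the true $a_n$ on $N$, with the overlap error controlled by $\sum 2^n\cdot$(small), which is achieved by a fast enough thinning. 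The element $b=\bigvee_{n\in N}b_n$ then exists in $B$, and we get $|\mu_n(b)|>\varepsilon/4$ for infinitely many $n$, contradicting weak$^*$-nullness. I expect the delicate point to be making this slalom-based diagonalization uniform: the approximations live in $V$ and are only finitely branching, so the error estimates must be carried out for all branches at once. This is where Brech's fusion argument would be needed in detail.
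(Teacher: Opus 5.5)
The paper gives no proof of this statement. It is quoted from Brech's Theorem~3.1, and only its conclusions are used in Example~\ref{Br}. Judged on its own, your proposal settles only the cardinal-arithmetic half, and one point there needs repair. Nice names under the $\omega_2$-c.c.\ give the bound $\kappa^{\omega_1}$, not $\kappa^{\omega}$; to get $\kappa^{\omega}$ you need that every real is added by countably many coordinates. Moreover, $\kappa^{\omega}=\kappa$ does not follow from CH and regularity. For $\kappa=\aleph_{\omega+1}$ with $\aleph_\omega^{\aleph_0}>\aleph_{\omega+1}$ one gets $\mathfrak c\ge\kappa^{\omega}>\kappa$ in $V[G]$, so an extra hypothesis such as GCH in $V$ is needed.

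The Grothendieck part has a genuine gap. The decisive step is deferred to ``Brech's fusion argument'', and the mechanism you sketch around it does not work, for three reasons.

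\textbf{(1) One supremum is not enough.} If $b=\bigvee_{m\in N}a_m$ exists, then $\widehat b$ is the closure of $\bigcup_{m\in N}\widehat{a_m}$. A Radon measure on $K$ may charge the nowhere dense remainder $\widehat b\setminus\bigcup_{m\in N}\widehat{a_m}$, and Rosenthal's lemma does not control it. Haydon's argument needs either suprema along an uncountable almost disjoint family of subsets of $M$, whose remainders are pairwise disjoint, or Phillips' lemma when all subsets have suprema.

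\textbf{(2) No part of the new antichain can be kept.} Let $s$ be a Sacks real and $e\colon 2^{<\omega}\to\omega$ a bijection in $V$. Then $a_n=\{e(s\restriction n)\}$ is an antichain in $B$ with $\bigcup_{n\in N}a_n\notin V$ for every infinite $N$, so $\bigvee_{n\in N}a_n$ never exists in $B$. Thus $B$ fails the subsequential completeness property in $V[G]$, and you are forced to replace $a_n$ by ground-model sets $b_n\neq a_n$.

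\textbf{(3) The measures on the substitutes are uncontrolled.} Everything now rests on making $|\mu_n|(\widehat{b\triangle a_n})$ small for infinitely many $n$, and nothing in your sketch does this.
\begin{itemize}
\item The $\mu_n$ are functions on $B$, which has size $\omega_1$, so they cannot be coded by one real.
\item The set $b$ at which they must be evaluated is produced only after the slalom is fixed.
\item Agreement of $b_n$ with $a_n$ on finite pieces says nothing about $\mu_k$-values, since the $\mu_k$ may live on $K\setminus\omega$.
\item Rosenthal's lemma bounds $|\mu_k|$ only on the true $a_m$, not on the wrong candidates in the slalom. So the estimate $\sum 2^n\cdot(\text{small})$ has nothing small to sum.
\end{itemize}
Controlling the measures on all candidates at once has to happen inside the forcing. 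That means a fusion argument that works with names for the $\mu_n$ and decides their values on the relevant ground-model sets along the finitely many branches. This is exactly the part left out of your proposal.
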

\begin{example}\label{Br}
Brech's compact space $K$ does not contain copies of the space  $\beta\omega$ and does not have infinite converging sequences but yet $K$ admits  2DCP.  Note here that our Example \ref{last} in Section \ref{final} provides (but under $\diamondsuit$) compact Efimov spaces with(out) 2DCP.
\end{example}
\begin{proof}
It is obvious that $K$ does not admit infinite converging sequences; otherwise the Banach space $C(K)$ would contain complemented copies of the Banach space $c_0$ (by applying the main Cembranos result of \cite{Ce}). Moreover, since $\dens C(K)=\omega_1<\mathfrak c$, there is no continuous  surjection from $C(K)$ onto the Banach space  $\ell_{\infty}$ (whose density is $\mathfrak{c}$). This implies that $K$ does not contain  copies of $\beta\omega$.

Now we prove that $K$ has 2DCP.

For \(a\in B\), let
$B\restriction a:=\{b\in B:b\subseteq a\}$
be the relative Boolean algebra below \(a\).

\textbf{Claim (A)}: For every \(a\in B\), the clopen subset \(\widehat a\subseteq K\) is homeomorphic to
\(\Stone(B\restriction a)\).

Indeed, define
$\Phi:\widehat a\longrightarrow \Stone(B\restriction a), \;
\Phi(u)=u\cap (B\restriction a)$.
Since \(a\in u\) for every \(u\in\widehat a\), the family \(\Phi(u)\) is an ultrafilter on
\(B\restriction a\).

Conversely, if \(v\in \Stone(B\restriction a)\), then
\[
\Psi(v):=\{b\in B: b\cap a\in v\}
\]
is an ultrafilter on \(B\), and \(a\in\Psi(v)\), so \(\Psi(v)\in\widehat a\).

It is routine to check that \(\Phi\) and \(\Psi\) are inverse to each other. Moreover,
for each \(b\in B\restriction a\),
\[
\Phi[\widehat b\cap \widehat a]
=
\{v\in \Stone(B\restriction a): b\in v\},
\]
so \(\Phi\) is a homeomorphism. This proves Claim (A).

\textbf{Claim (B)}: The compact space \(K\) contains two disjoint clopen copies of itself.

Indeed,  let
$$A_0:=2\omega=\{2n:n\in\omega\},\,\, A_1:=2\omega+1=\{2n+1:n\in\omega\}.$$
Since \(A_0,A_1\in V\), we have \(A_0,A_1\in B\).
They are disjoint, non-zero, and \(A_0\cup A_1=\omega\).

For \(i\in\{0,1\}\), define a bijection
$$e_i:\omega\to A_i,\qquad e_0(n)=2n,\quad e_1(n)=2n+1.$$
Because \(e_i\in V\), the map
$$\varphi_i:B\longrightarrow B\restriction A_i,\qquad
\varphi_i(X)=e_i[X]$$
is a Boolean isomorphism.
Hence, by Stone duality,
$\Stone(B\restriction A_i)\cong \Stone(B)=K.$

By Claim (A)  the clopen set \(\widehat{A_i}\subseteq K\) is homeomorphic
to \(\Stone(B\restriction A_i)\), and therefore
$\widehat{A_i}\cong K\; (i=0,1).$
Finally,
$\widehat{A_0}\cap \widehat{A_1}=\varnothing,$ because no ultrafilter can contain two disjoint elements of the Boolean algebra.
Thus \(K\) contains two disjoint clopen subspaces, each homeomorphic to \(K\).
This proves Claim (B), and consequently the proof of Example \ref{Br} is completed.
\end{proof}
\begin{remark}
(1)  A similar argument applies to show that $K$ admits a  base of clopen sets each of them is homeomorphic to the  space $K$. (2) In \cite{SobotaZdomskyy} Sobota and Zdomskyy, motivated by Brech's result, studied the preservation of the Nikodym property by the Sacks forcing  \(\mathbb S_\kappa\) and proved that if $\mathcal{A}$ is a
$\sigma$-complete Boolean algebra in $V$, then $\mathcal{A}$ has the Nikodym property in the \(\mathbb S_\kappa\)-generic extension $V[G]$. The same proof as above shows that the Stone space  $K:=\Stone(\mathcal{A})$ admits  2DCP for $\mathcal{A}=P(\omega)$. We refer also to recent \cite{ManevSobotaZdomskyy} for a similar result of that type with more properties.
\end{remark}
\section{$h$-homogeneity and 2DCP}\label{homogeneity}
The notion of $h$-homogeneity was introduced by Ostrovskii  and van Mill, and next studied systematically by Terada \cite{Terada} and others,
 see \cite{ArhangelskiiVanMill},  \cite{vanEngelen},  \cite{MedvedevHomogeneity}, \cite{CarroyMediniMuller}, \cite{MediniProducts}.

 A topological space $X$  is \emph{homogeneous} if for each   $x, y\in X$ there exists a homeomorphism $f: X\rightarrow X$  with $f(x) = y$, and $X$ is  \emph{$h$-homogeneous} if every
non-empty clopen subset of  $X$ is homeomorphic to $X$.

The space $\beta\omega\setminus\omega$ is $h$-homogeneous but not homogeneous.  Van Douwen, see   \cite{van Douwen},   constructed a compact homogeneous space which is not $h$-homogeneous.
Terada \cite{Terada} posed the problem (still being open) whether $X^\omega$ is $h$-homogeneous for  zero-dimensional first-countable spaces $X$. Medini (\cite[Theorem 3.1]{Medini2}) proved that $X^{\lambda}$  is $h$-homogeneous for every zero-dimensional non-separable metrizable space $X$ and every infinite cardinal $\lambda$.
Many infinite powers are $h$-homogeneous under mild hypotheses; see \cite{Terada,MediniProducts}.
For every non-empty zero-dimensional space $X$ there exists a non-empty zero-dimensional space $Y$ such that $X\times Y$ is $h$-homogeneous, see \cite[Corollary~16]{MediniProducts}.
\begin{corollary}\label{cor:h-hom}
Every infinite topological space containing an infinite compact disconnected  $h$-homogeneous space $X$ has 2DCP.
\end{corollary}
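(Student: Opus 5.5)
Since $X$ is a compact subset of the ambient space $Y$, it suffices to show that $X$ itself has 2DCP. A witnessing sequence $(K_n)$ of compact subsets of $X$ is also a sequence of compact subsets of $Y$, and the homeomorphic disjoint copies stay inside $K_n$. So the plan is to produce two disjoint clopen subsets of $X$, each homeomorphic to $X$, and then take $K_n=X$ for all $n$, exactly as in Theorem~\ref{I}(10).

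The key step is to find a proper nonempty clopen subset $U\subsetneq X$. This is where disconnectedness is used. Since $X$ is disconnected, it can be written as $X=U\cup V$ with $U,V$ disjoint, nonempty and open. Each is then also closed, so both are nonempty clopen sets. By $h$-homogeneity, $U$ and $V$ are each homeomorphic to $X$. Hence $X$ contains two disjoint copies of itself, and Theorem~\ref{I}(10) gives 2DCP for $X$. Explicitly, $K_n=X$ works for every $n$: $K_n$ contains the disjoint compact sets $U$ and $V$, both homeomorphic to $K_{n+1}=X$.

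The only point needing care is whether ``disconnected'' is meant in the weak sense (not connected), which is all the argument above requires. If the intended hypothesis is zero-dimensionality instead, the argument is the same: $X$ is infinite, so it has at least two points, and since it is zero-dimensional and Hausdorff, some clopen set separates them. Infiniteness is otherwise unused; it only excludes degenerate finite cases, where a finite disconnected space cannot be $h$-homogeneous anyway, since a singleton clopen set is not homeomorphic to $X$. I expect no real obstacle. The one thing to check is that compactness makes $U$ and $V$ compact, as Definition~\ref{def:2dcp} requires, and this holds because closed subsets of a compact space are compact.
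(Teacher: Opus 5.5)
Your proof is correct and is essentially the paper's argument: split $X$ into two disjoint nonempty open (hence clopen) sets, use $h$-homogeneity to see that each is homeomorphic to $X$, and invoke Theorem~\ref{I}(10). Your extra remarks, that 2DCP of the compact subspace $X$ passes to the ambient space and that the clopen pieces are compact, make explicit what the paper leaves implicit.
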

\begin{proof}
Since $X$ is disconnected, there exists a separation $X=U\cup V$ where $U$ and $V$
are non-empty disjoint open subsets of $X$. As $U=X\setminus V$ and $V=X\setminus U$,
both $U$ and $V$ are clopen in $X$.
By $h$-homogeneity, every non-empty clopen subset of $X$ is homeomorphic to $X$. Now we apply Theorem \ref{I}(10).
\end{proof}
\begin{problem}
Does every infinite $h$-homogeneous compact space have 2DCP?
\end{problem}
The search for an infinite compact
$h$-homogeneous space without 2DCP reduces to the general problem of finding a nontrivial connected
compact space without 2DCP.

The Cantor set $2^\omega$ is zero-dimensional $h$-homogeneous and  $\beta\omega$  is not $h$-homogeneous although has 2DCP.
On the other hand, we note  the following
\begin{example}
There exist metrizable $h$-homogeneous spaces $X$ without copies of  $2^{\omega}$  for which $\beta X$ has  2DCP.
\end{example}
\begin{proof}
In \cite[Theorem 3.3]{Medini} was constructed a topological dense subgroup $X$ of $\mathbb{Z}^{\omega}$ which   is also a $\lambda$-set (i.e. every countable subset of $X$ is $G_{\delta}$). Clearly $X$ is zero-dimensional,  and as  a dense subgroup of  $\mathbb{Z}^{\omega}$, is not locally precompact (i.e. there is no   precompact
neighbourhood of the identity in $X$). Hence, by applying \cite[Theorem 8]{MedvedevHomogeneity}, see also \cite[Theorem 3.1]{Medini}, we deduce that $X$ is $h$-homogeneous.   Since $X$ is a $\lambda$-set, it cannot contain copies of $2^{\omega}$.  On the other hand, it is well-known that  every non-compact metrizable space contains a closed discrete copy of $\omega$; hence $\beta X$ contains a copy of $\beta\omega$, so $\beta X$ has 2DCP.
\end{proof}
\section{Final remarks and  illustrating examples}\label{final}
Since no perfect space is scattered, next fundamental  Lemma  follows directly  from \cite[Theorem 8.5.4]{SemadeniCk}.
\begin{lemma}\label{lem:onto-cantor}
For every perfect compact zero-dimensional Hausdorff space $X$  there exists a continuous surjection from $X$ onto $2^\omega$.
\end{lemma}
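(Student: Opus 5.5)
The plan is to derive the lemma from the characterization recalled in the introduction (\cite[Theorem 8.5.4]{SemadeniCk}): a compact space $X$ is scattered if and only if $\dim X=0$ and there is no continuous surjection from $X$ onto $2^\omega$. Let $X$ be a non-empty perfect compact zero-dimensional Hausdorff space. The first step is to check that $X$ is not scattered. Perfectness means $X'=X$, so $X$ itself is a non-empty subspace without isolated points. Equivalently, $X^{(\alpha)}=X\neq\varnothing$ for every ordinal $\alpha$, and so $X$ is not scattered by the Cantor--Bendixson theorem.

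Next I would compare the two notions of zero-dimensionality. For compact Hausdorff spaces, having a base of clopen sets is equivalent to $\dim X=0$ (covering dimension), since compact zero-dimensional spaces are strongly zero-dimensional. Hence $\dim X=0$. Now apply the characterization to $X$. Since $X$ is not scattered and $\dim X=0$, the failure of the conjunction ``$\dim X=0$ and no surjection onto $2^\omega$'' forces the existence of a continuous surjection $X\to 2^\omega$, which is the claim.

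The only step needing care is the dimension bookkeeping above; the rest is immediate. If one prefers not to rely on the cited theorem, there is a self-contained route I would sketch as a backup. Build a Cantor scheme of non-empty clopen sets $U_s$, $s\in 2^{<\omega}$, with $U_{s0},U_{s1}$ partitioning $U_s$, each non-empty. This is possible because a non-empty clopen subset of a perfect space has at least two points, and those points can be separated by a clopen set. Then define $f(x)=$ the branch $(s_n)$ with $x\in U_{s|n}$ for all $n$. This map is continuous, and it is surjective by compactness, because nested non-empty closed sets along any branch have non-empty intersection. No separation of points is needed, so the splitting is enough.
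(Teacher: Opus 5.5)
Your proposal is correct and follows the paper's argument: a non-empty perfect space is not scattered, and since $\dim X=0$, Semadeni's Theorem 8.5.4 then forces a continuous surjection onto $2^\omega$. Your backup Cantor-scheme argument is also a valid self-contained proof, and you rightly make explicit the non-emptiness and the $\dim X=0$ check that the paper leaves implicit.
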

Lemma \ref{lem:onto-cantor} is often combined with elementary  methods producing
metrizable continuous images. However, 2DCP does not behave well under continuous
images, preimages or closed subspaces.
\begin{example}\label{III}
A topological space with 2DCP may have a continuous image without 2DCP  (since every compact metric space is a continuous
image of Cantor set).  Conversely, applying  example in Theorem \ref{thm:scattered-no2dcp}  one gets that  there exist perfect
zero-dimensional compact spaces admitting a surjection onto $2^{\omega}$  but failing 2DCP.
\end{example}
\begin{example}\cite[Example 10(3)]{kakol-molto}
There exist compact zero-dimensional nonseparable spaces whose every infinite separable closed subspace contains a copy of $\beta\omega$, so it lacks infinite converging sequences.
\end{example}
On the other hand, using Theorem \ref{thm:metric-characterisation} we note that no infinite compact metric space is hereditarily 2DCP.

Many consistent constructions related to Efimov spaces live in the totally disconnected world.
For compact Hausdorff spaces, total disconnectedness implies zero-dimensionality (the components coincide with quasi-components).
In that setting, 2DCP often follows from clopen self-similarity.
Recall that an infinite compact Hausdorff space $K$ is an \emph{Efimov space} if it contains neither a copy of $\omega+1$
(a non-trivial convergent sequence) nor a copy of $\beta\omega$.

It is an open question  whether Efimov spaces exist in ZFC; see Hart's survey \cite{HartSurveyEfimov} and
D\v{z}amonja--Plebanek \cite{DzamonjaPlebanekEfimov}, or \cite[Example 17]{Kakol-Sliwa} and for several references therein.
Under additional axioms, there are Efimov spaces both with and without 2DCP.
\begin{example}\label{last}
Under $\diamondsuit$ there are totally disconnected hereditarily separable Efimov spaces which are $h$-homogeneous
(and hence have 2DCP), see \cite[Theorem 3.22]{DelaVega} and \cite[Example 15]{Kakol-Sliwa},   and also totally disconnected hereditarily separable Efimov spaces which are perfect but admit no two disjoint
infinite closed homeomorphic subspaces (and hence fail 2DCP); see \cite[Corollaries~6.23(2) and~6.24]{KakolSobotaZdomskyyGrothJNP}.
\end{example}
\begin{remark}\label{RE} In \cite{Talagrand} Talagrand constructed (under (CH)) an Efimov space $X$ such that the Banach space $C(X)$  does not contain complemented copies of $c_0$ and yet does not have any quotient isomorphic to $\ell_{\infty}$.
Consequently, applying the closed graph theorem,  the space $C_p(X)$ does not have  quotients isomorphic to $(\ell_{\infty})_p$.

Moreover, the space $C_p(X)$ does not have quotients isomorphic to   $(c_0)_{p}$. Indeed,
otherwise, using \cite[Theorem 1]{Banakh-Kakol-Sliwa} the space $C_p(X)$ would have a complemented copy of $(c_0)_p$. The closed graph theorem would show that $C(X)$ would have a complemented  copy of $c_0$, a contradiction.
\end{remark}
This Remark \ref{RE},  Example \ref{last} combined with
Problem \ref{Open} and Problem \ref{XX}  suggest the following essential
\begin{problem} Does the  space $X$ constructed by Talagrand have 2DCP?
\end{problem}
Being motivated by these remarks about Efimov spaces and the topic around 2DCP one can ask
if  there exists a compact   space in ZFC with 2DCP without  copies of $\beta\omega$ or  $2^{\omega}$.

\emph{YES does exist!} We answer this  question in the affirmative. The following Example \ref{EX} was  suggested by  A. Medini who  provided to the authors the proof  that  the classical double arrow space is $h$-homogeneous.

Recall that the double arrow space is the space  $$X=\{(x,0):0<x\le1\}\cup\{(x,1):0\le x<1\}$$  with the lexicographic order \((x,0)<(x,1)<(x',0)\) for \(x<x'\), up to two isolated points $(0,0)$ and $(1,1)$ that we removed from $X$.
\begin{example} \label{EX}
The double arrow space $X$ equipped with the order topology is a zero-dimensional compact $h$-homogeneous space (hence has 2DCP by Corollary \ref{cor:h-hom}) which does not  contain a copy of $\beta\omega$ or  a copy of $2^{\omega}$ although $X$ has  infinite converging sequences.  Moreover,
\begin{enumerate}
\item $C_p(X)$  has a complemented subspace isomorphic to  $(c_0)_p$.
\item  There is no continuous linear surjection from $C_p(X)$  onto $(\ell_{\infty})_p$. In particular, $C_p(X)$ has no quotient isomorphic to $(\ell_{\infty})_p$.
\end{enumerate}
\end{example}
\begin{proof}
 It is well-known  that the space  $X$ is a compact first-countable zero-dimensional space, see \cite[Example 3.10.C]{EngelkingGT}.
Moreover, observe that
$\{[x^+,y^-]:x,y\in [0,1]\text{ and }x<y\}$
is a $\pi$-base for $X$ consisting of clopen sets. Furthermore, if $x,y\in [0,1]$ and $x<y$, then $[x^+,y^-]$ is clearly order-isomorphic (hence homeomorphic) to $X$. Since $X$ is  first-countable, it follows from a result of Matveev (see \cite[Proposition 17]{MediniProducts} for a slightly more general result) that $X$ is $h$-homogeneous. Clearly $X$ is zero-dimensional. Hence $X$ has 2DCP by Corollary \ref{cor:h-hom}.

Since $|X|=\mathfrak{c}$, the space  $X$ does not contain a copy of $\beta\omega$.
Observe also that $X$ cannot contain copies of $2^{\omega}$. This follows directly from the following well-known fact: \emph{All metrizable subspaces of $X$ are countable}.

Since $X$ contains the  convergent sequence space $S$, we apply \cite[Theorem 1]{BanakhKakolSliwaQuotients} to get the  claim  that  $(c_0)_p (\approx C_p(S))$ is complemented in $C_{p}(X)$.

Now assume that there is a continuous linear surjective  mapping $T: C_p(X)\rightarrow (\ell_{\infty})_p$. It is easy to see that $T: C(X)\rightarrow \ell_{\infty}$ has closed graph between Banach spaces $C(X)$ and $\ell_{\infty}$.
Hence, applying the closed graph theorem, see \cite[Theorem 2.26]{Fabian}, we get that $T: C(X)\rightarrow l_{\infty}$  is continuous. Since the size of the  dual of $\ell_{\infty}$ is $2^{\mathfrak{c}}$ while for the dual of $C(X)$ is $\mathfrak{c}$, see \cite{Kenderov},  we have  a contradiction.
\end{proof}
In \cite{van Douwen} van Douwen constructed a compact homogeneous space $bH$ which is not  $h$-homogeneous  which is hereditary  Lindel\"of (see the remark preceding
\cite[Fact 5.1]{van Douwen}), hence $bH$  does not contain copies of $\beta\omega$ and
 $bH \setminus H$   does not contain $2^\omega$, see \cite[Fact 5.1]{van Douwen}.

Recall that  a compact space $X$ is Rosenthal compact, if $X$ can be embedded into the space $B_{1}(Y)$ of Baire-one functions on a Polish space $Y$ with the pointwise topology. Trivially every compact metric space is Rosenthal. Next Corollary \ref{Rosen} supplements Theorem \ref{thm:metric-characterisation}.
\begin{corollary}\label{Rosen}
A compact Rosenthal space $X$ is not scattered if and only if $X$ admits 2DCP.
\end{corollary}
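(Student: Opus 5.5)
One direction is immediate from Theorem \ref{thm:scattered-no2dcp}: if $X$ admits 2DCP then $X$ cannot be scattered, since no scattered compact Hausdorff space has 2DCP. So the whole content is the converse. Assume $X$ is a non-scattered Rosenthal compactum; we must produce a sequence $(K_n)$ as in Definition \ref{def:2dcp}. The plan is to show that $X$ contains a homeomorphic copy of the Cantor set $2^\omega$. Then we argue as in the proof of Theorem \ref{thm:metric-characterisation}: $2^\omega$ contains two disjoint clopen copies of itself, so the constant sequence $K_n=2^\omega$ witnesses 2DCP. (Alternatively, Corollary \ref{cor:h-hom} applies, since $2^\omega$ is an infinite compact disconnected $h$-homogeneous space.)

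To find the Cantor set, I would first pass to a perfect subspace. Since $X$ is not scattered, the Cantor--Bendixson process stabilises at a non-empty closed set $P=X^{(\alpha)}$ with $P'=P$, and $P$ is again Rosenthal compact, because closed subspaces of Rosenthal compacta are Rosenthal. It therefore suffices to show that a non-empty perfect Rosenthal compactum contains $2^\omega$. The key input from the theory of Rosenthal compacta is that they are Fréchet--Urysohn (Bourgain--Fremlin--Talagrand) and, more strongly, that every separable Rosenthal compactum has a metrizable/first-countable dense structure; Todorcevic's results show moreover that every non-metrizable separable Rosenthal compactum contains either a copy of the double arrow space, the Alexandroff duplicate of $2^\omega$, or the one-point compactification of a discrete set of size continuum. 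The simplest route, however, is to use that Rosenthal compacta contain dense completely metrizable subspaces (Todorcevic), or even just that every Rosenthal compactum has a dense set of $G_\delta$ points.

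Concretely, let $P$ be perfect and Rosenthal. I take a dense metrizable $G_\delta$ subset $D\subset P$, or I use the fact that Rosenthal compacta carry a dense subset of points of countable character. For a perfect compact space, a non-empty open set $U$ with metrizable closure $\overline{U}$ is enough: $\overline{U}$ is then a compact metric space without isolated points (isolated points of $\overline{U}$ inside $U$ would be isolated in $P$), hence uncountable. By Theorem \ref{thm:metric-characterisation} it contains $2^\omega$. If no such open set is available, I fall back on the standard Cantor-scheme construction. I build a binary tree of non-empty closed sets $F_s$ with $F_{s0}\cap F_{s1}=\varnothing$ whose diameters shrink with respect to a countable family of functions of the embedding. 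Perfectness allows each splitting, and Fréchet--Urysohn compactness together with the first-countability of $G_\delta$ points ensures that the branch intersections are single points, which yields an embedded Cantor set.

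The main obstacle is exactly this step, extracting $2^\omega$ (equivalently, a metrizable perfect compact piece) from a perfect Rosenthal compactum, because Rosenthal compacta need not be metrizable; the double arrow space is a Rosenthal compactum. I would therefore rely on a cited theorem that every non-scattered Rosenthal compactum contains a copy of $2^\omega$; this is a known consequence of Todorcevic's work, and is equivalent to the known fact that a Rosenthal compactum is scattered iff it is countable-to-$\ldots$, i.e.\ iff $C(X)$ is Asplund/$X$ does not map onto $[0,1]$, combined with Fréchet--Urysohn perfect-set arguments. With that citation in hand the corollary follows at once from Theorem \ref{thm:metric-characterisation}.
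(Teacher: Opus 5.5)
Your first direction (2DCP $\Rightarrow$ not scattered, via Theorem~\ref{thm:scattered-no2dcp}) is fine. The converse, however, rests on a false claim: a non-scattered Rosenthal compactum need \emph{not} contain a copy of $2^\omega$. The double arrow space of Example~\ref{EX}, which you mention yourself, is a counterexample. It is Rosenthal compact and has no isolated points. All of its metrizable subspaces are countable, so it contains no Cantor set.

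The same space shows concretely why each of your intermediate routes fails:
\begin{enumerate}
\item It is first countable, so every point is $G_\delta$. Yet no non-empty open set has metrizable closure, because every non-empty open set contains a clopen interval $[x^+,y^-]$, and that interval is homeomorphic to the whole non-metrizable space.
\item It has no dense completely metrizable subspace. Such a subspace would be countable and, by density, without isolated points, contradicting the Baire category theorem.
\item In any Cantor scheme of non-empty closed sets in it, the branch intersections cannot all be singletons, since otherwise the scheme would define an embedding of $2^\omega$.
\end{enumerate}
So the ``cited theorem'' you plan to rely on does not exist, and no refinement of the Cantor-scheme argument can prove it.

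What is missing is a second alternative. The paper invokes the dichotomy that a non-scattered Rosenthal compactum contains a copy of $2^\omega$ \emph{or} a copy of the double arrow space, and then verifies 2DCP separately in both cases. For $2^\omega$ your argument is correct. For the double arrow space you need a non-metric argument. For $x<y<u<v$ in $[0,1]$, the intervals $[x^+,y^-]$ and $[u^+,v^-]$ are disjoint clopen sets, each order-isomorphic (hence homeomorphic) to the whole space, so Theorem~\ref{I}(10) applies. Equivalently, use $h$-homogeneity and Corollary~\ref{cor:h-hom}, as in Example~\ref{EX}.

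Some structure theorem specific to Rosenthal compacta is genuinely needed, since Theorem~\ref{thm:scattered-no2dcp} gives perfect compacta without 2DCP. But that theorem must allow for non-metrizable self-similar pieces such as the double arrow space, not only for Cantor sets.
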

\begin{proof}
If $X$ is scattered, we apply Theorem \ref{thm:scattered-no2dcp}. Conversely, if $X$ is not scattered, then by \cite[Theorem 4]{Rosenthal} the space $X$ either contains the Cantor set $2^{\omega}$ or the double arrow space. Both copies admit 2DCP, so $X$ has the same property.
\end{proof}
Theorem \ref{I} (3) may suggest the following
\begin{example}\label{T}
 Let $X = S^2\sqcup T^2$ be the disjoint union of the $2$-sphere and the $2$-torus (with the disjoint union topology). Then $X$
is compact and locally homogeneous (hence has 2DCP by Theorem \ref{I}), but it is not homogeneous.
\end{example}
\begin{proof}
Clearly $X$ is compact. Note that  $X$ is locally homogeneous. Indeed, fix $x, y\in  X$. Since both $S^2$ and $T^2$  are $2$-manifolds (surfaces)
without boundary, each point has an open neighbourhood homeomorphic to $\mathbb{R}^2$.  Let $$\phi_{x} :
V_x\rightarrow \mathbb{R}^2,\,\, \phi_{y}: V_y \rightarrow \mathbb{R}^2$$ be such homeomorphisms with $\phi_x(x)=0$ and   $\phi_y(y)= 0$ (by
composing with a translation if necessary). Then the map $$f =\phi_{y}^{-1}\circ\phi_{x}: V_x\rightarrow V_y$$ is a
homeomorphism such that $f(x)=y$.  Observe also that $X$  is not homogeneous. In fact, the connected components of $X$ are $S^2$ and $T^2.$ Any
homeomorphism $F: X\rightarrow X$  must map a connected component onto a connected component.
If $X$ were homogeneous, there would exist a homeomorphism $F$ mapping a point $x\in S^2$ to
a point $y\in T^2$.  This would imply $F(S^2)=T^2$, so $S^2\approx T^2.$ This is a contradiction (since $\pi_1(S^2)=0$ and $\pi_1(T^2)=\mathbb{Z}^2$).
\end{proof}

The following simple observation motivates next Corollary \ref{AA}:  Let $Y\subset X$ be a compact countable infinite space and assume $X$ is compact metrizable uncountable. Clearly the restriction map $T: C_p(X)\rightarrow C_p(Y)$ is continuous linear (and open) but $Y$ fails 2DCP although $X$ has 2DCP. Nevertheless, we have
\begin{corollary}\label{AA}
Let $X$ and $Y$ be  compact spaces. Then $X$ is metrizable with 2DCP if and only if $Y$ is metrizable with 2DCP provided at least one of the conditions below holds:
\begin{enumerate}
\item $C_p(X)$ and $C_p(Y)$ are homeomorphic.
\item There are continuous linear surjections $T:C_p(X)\rightarrow C_p(Y)$ and $Q:C_p(Y)\rightarrow C_p(X)$.
\end{enumerate}
\end{corollary}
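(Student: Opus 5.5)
By Theorem \ref{thm:metric-characterisation}, for a compact metrizable space 2DCP is equivalent to uncountability. So the statement reduces to the following: under (1) or (2), $X$ is metrizable and uncountable if and only if $Y$ is. By symmetry it suffices to prove one direction. Assume $X$ is metrizable and uncountable, and show that $Y$ is metrizable and uncountable.

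\textbf{Metrizability.} For a compact space $Z$, metrizability is equivalent to $C_p(Z)$ having a countable network. Equivalently, $Z$ is metrizable if and only if $C_p(Z)$ is separable, since $iw(Z)=d(C_p(Z))$ and $iw$ coincides with weight for compacta. Both properties are preserved under the maps in question.

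In case (1), a homeomorphism preserves network weight, so $nw(C_p(Y))=nw(C_p(X))=\omega$. Hence $nw(Y)=nw(C_p(Y))=\omega$, and $Y$ is compact metrizable.

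In case (2), $T$ is a continuous surjection, so $C_p(Y)=T(C_p(X))$ is separable, hence $Y$ is metrizable. The map $Q$ is not needed here, but it is needed for the converse direction, and this is exactly what the symmetric hypothesis provides.

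\textbf{Uncountability.} This uses the Cantor--Bendixson structure. A countable compact space is scattered. A compact space $Z$ is scattered if and only if $C_p(Z)$ is Fréchet--Urysohn (Gerlits), and the Fréchet--Urysohn property is a topological invariant, which settles case (1).

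A cleaner route that covers both cases is via cardinality and weight of duals. For compact metrizable $Z$, $Z$ is countable if and only if $C(Z)$ is Asplund, equivalently its dual is separable. Via the closed graph theorem, as in the proof of Example \ref{EX}, a continuous linear map between $C_p$-spaces is continuous between the Banach spaces $C(Z)$. In case (2), suppose $Y$ were countable. Then the adjoint $Q^*:C(X)^*\to C(Y)^*$ would be injective, because $Q$ is surjective. Since $Q$ is $C_p$-continuous, $Q^*$ sends each point evaluation $\delta_x$ to a finitely supported functional. This embeds the uncountable set $\{\delta_x:x\in X\}$ into the finitely supported measures on the countable set $Y$.

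The cleaner finish is this. $C_p(X)$ has a closed linear subspace mapping onto $C_p(2^\omega)$, by restriction to a copy of $2^\omega$, which exists by Theorem \ref{thm:metric-characterisation}. Composing with $T$ and using Theorem \ref{thm:bks}-type quotients gives a continuous linear surjection that, by the closed graph theorem, is a Banach quotient map from $C(Y)$ for countable $Y$ onto a space containing $C(2^\omega)$ structure. This is impossible because quotients of Asplund spaces are Asplund, while $C(2^\omega)$ is not (Corollary \ref{S1}).

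More concretely: $C(X)$ is not Asplund, and $Q:C(Y)\to C(X)$ is a bounded surjection by the closed graph theorem. If $Y$ were countable, $C(Y)$ would be Asplund, and so would its quotient $C(X)$, a contradiction. In case (1) I would instead use the Fréchet--Urysohn (Gerlits) invariant above, or the known fact that $t$-equivalence preserves scatteredness for compacta.

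The main obstacle is case (1): a mere homeomorphism carries no linear structure. I would first try the Gerlits characterisation (compact $Z$ is scattered if and only if $C_p(Z)$ is Fréchet--Urysohn), which is purely topological and hence settles it.
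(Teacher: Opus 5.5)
Your proof is correct. For metrizability and for case (2) it is essentially the paper's argument. The paper phrases metrizability as ``$C_p(Y)$ separable, so $Y$ is submetrizable, hence metrizable''. It handles case (2) exactly by your closing Asplund argument: the closed graph theorem, plus the fact that quotients of Asplund spaces are Asplund.

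The genuine difference is case (1). You use the Gerlits--Pytkeev theorem that a compactum $Z$ is scattered iff $C_p(Z)$ is Fr\'echet--Urysohn. The paper uses a far more elementary invariant: if $Y$ were countable, then $C_p(Y)\subset\R^Y$ would be metrizable, hence so would $C_p(X)$. Since the character of $C_p(X)$ is $|X|$, this forces $X$ to be countable. Your route costs a deep theorem but proves more, namely that $t$-equivalence preserves scatteredness for all compacta, with no metrizability assumption.

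You should also prune the write-up. The ``cleaner finish'' paragraph is confused: $T$ maps out of $C_p(X)$, so composing with it cannot yield a map out of $C(Y)$, and Theorem~\ref{thm:bks} is irrelevant. The adjoint paragraph you abandoned, by contrast, closes in one line and gives the most elementary proof of case (2). $Q^*$ is an injective linear map from the dual $L(X)$ of $C_p(X)$ into $L(Y)$. The point evaluations $\{\delta_x:x\in X\}$ are linearly independent, and $L(Y)$ has Hamel dimension $|Y|$. Hence $|X|\le|Y|$, with no closed graph or Asplund theory needed.
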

\begin{proof} Assume that $C_p(X)$ and $C_p(Y)$ are homeomorphic and $X$ is metrizable with 2DCP. Then  $X$ is uncountable by Theorem \ref{thm:metric-characterisation} and  $C_p(X)$ and $C_p(Y)$ are  separable.  Hence $Y$ is submetrizable, so it is metrizable.  If $Y$ were countable, then $C_p(Y)$ and $C_p(X)$ would be metrizable, which implies the countability of $X$, a contradiction. Hence $Y$ is uncountable, so it has 2DCP (by Theorem \ref{thm:metric-characterisation}).

Assume that item (2) holds. Then $X$ is metrizable if and only if $Y$ is metrizable. If $X$ has 2DCP, then $X$ is uncountable.  Assume that $Y$ is countable. Hence must be scattered.  By the assumption (and using the closed graph theorem) we note that  $Q: C(Y)\rightarrow C(X)$ is a continuous open linear surjection. By \cite[Theorem 18]{Phelps} each Banach space $C(Z)$ is Asplund if and only if $Z$ is scattered. Hence, since $C(Y)$ is Asplund, the space $C(X)$ is also Asplund \cite[Theorem 11]{Phelps}. This implies that $X$ is countable,  a contradiction. Hence $Y$ has 2DCP. The converse implication is clear.
\end{proof}
The spaces  $C_p([0,1]^\omega)$ and $C_p([0,1])$ are not homeomorphic, see \cite{Cauty}, but both compact spaces $[0,1]^{\omega}$ and $[0,1]$ have 2DCP.
 While 2DCP concerns compact subsets, it is conceptually related to (much stronger) self
similarity conditions on open sets.

A Hausdorff space $X$ is \emph{openly homogeneous} if every non-empty open
subset of $X$ is homeomorphic to $X$. It is openly rigid if no two distinct non-empty open
subsets of $X$ are homeomorphic.

 Classical openly homogeneous examples include Baire spaces $\kappa^{\omega}$  (for
infinite discrete $\kappa$), where every non-empty open set is homeomorphic to $\kappa^{\omega}$; see
 Brian–-Miller \cite{Brian}. Examples of openly rigid continua and dendrites
have been constructed; see Hart-–van Mill \cite{Hart-2}  and Charatonik–-Charatonik \cite{Charatonik}.

\end{document}